\def\vbar{\mathchoice{\vrule height6.3ptdepth-.5ptwidth.8pt\kern-.8pt}
   {\vrule height6.3ptdepth-.5ptwidth.8pt\kern-.8pt}
   {\vrule height4.1ptdepth-.35ptwidth.6pt\kern-.6pt}
   {\vrule height3.1ptdepth-.25ptwidth.5pt\kern-.5pt}}
\def\fudge{\mathchoice{}{}{\mkern.5mu}{\mkern.8mu}}
\def\bbc#1#2{{\rm \mkern#2mu\vbar\mkern-#2mu#1}}
\def\bbb#1{{\rm I\mkern-3.5mu #1}}
\def\bba#1#2{{\rm #1\mkern-#2mu\fudge #1}}
\def\bb#1{{\count4=`#1 \advance\count4by-64 \ifcase\count4\or\bba
A{11.5}\or \bbb B\or\bbc C{5}\or\bbb D\or\bbb E\or\bbb F \or\bbc
G{5}\or\bbb H\or \bbb I\or\bbc J{3}\or\bbb K\or\bbb L \or\bbb
M\or\bbb N\or\bbc O{5} \or \bbb P\or\bbc Q{5}\or\bbb R\or\bbc
S{4.2}\or\bba T{10.5}\or\bbc U{5}\or \bba V{12}\or\bba
W{16.5}\or\bba X{11}\or\bba Y{11.7}\or\bba Z{7.5}\fi}}
\renewcommand{\Re}{\mathrm{Re\,}}
\def \barr {\begin{array}{l}}
\def \ear {\end{array}}
\def \beq {\begin{equation}}
\def \eeq {\end{equation}}
\def \beqn {\begin{eqnarray}}
\def \eeqn {\end{eqnarray}}
\def \f {\end{document}}
\def\dfrac{\displaystyle\frac}
\newtheorem{lem}{Lemma}
\newtheorem{rem}{Remark}
\newtheorem{theo}{Theorem}
\newcommand{\field}[1]{\mathbb{#1}}
\newcommand{\R}{\field{R}}
\numberwithin{equation}{section}
\begin{document}

\title[Exponential convergence results for a 2D overhead crane with input delays]{Further results on the asymptotic behavior \\
of a 2D overhead crane with input delays:\\
Exponential convergence}

\author{Ka\"{\i}s Ammari}
\address{UR Analysis and Control of PDEs, UR13ES64, Department of Mathematics, Faculty of Sciences of Monastir, University of Monastir, 5019 Monastir, Tunisia}
\email{kais.ammari@fsm.rnu.tn}

\author{Boumedi\`ene Chentouf}
\address{Kuwait University, Faculty of Science, Department of Mathematics, Safat 13060, Kuwait}
\email{chenboum@hotmail.com, boumediene.chentouf@ku.edu.kw}

\begin{abstract}
This article is concerned with the asymptotic behavior of a 2D overhead crane. Taking into account the presence of a delay in the boundary, and assuming that no displacement term appears in the system, a distributed (interior) damping feedback law is proposed in order to compensate the effect of the delay. Then, invoking the frequency domain method, the solutions of the closed-loop system are proved to converge exponentially to a stationary position. This improves the recent result obtained in \cite{ac}, where the rate of convergence is at most of polynomial type.
\end{abstract}

\subjclass[2010]{34B05, 34D05, 70J25, 93D15}
\keywords{Overhead crane; damping control; time-delay; asymptotic behavior; exponential convergence}

\maketitle
%\tableofcontents

\thispagestyle{empty}

%\vfill\break

\section{Introduction}

\setcounter{equation}{0}
\begin{figure}[h]
\centering
\includegraphics[scale=0.5]{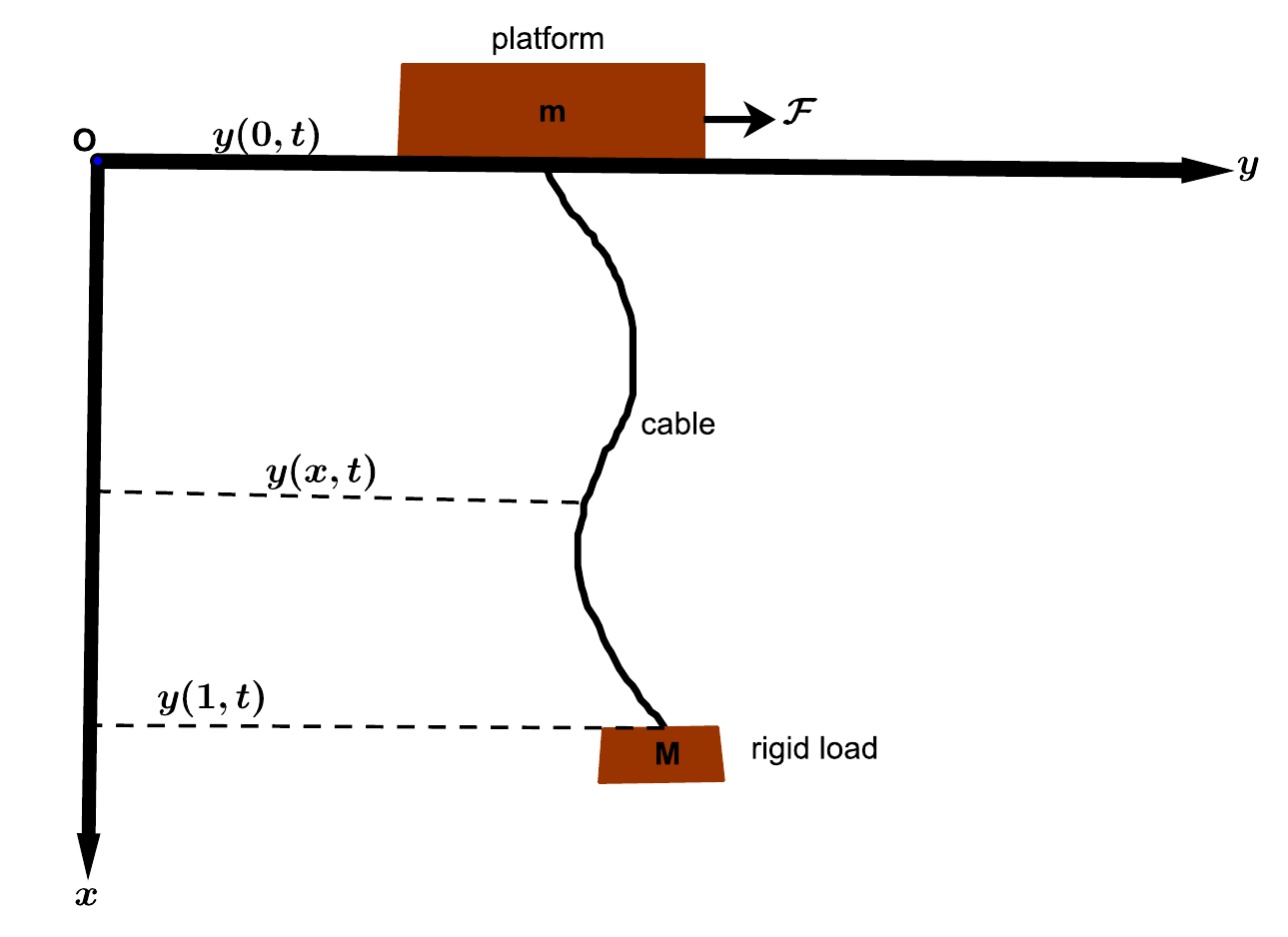}
\caption{The overhead crane model}
\label{fi1}
\end{figure}

In the present work, we consider an overhead crane system (see Fig. \ref{fi1}). It consists of a motorized platform of mass $m$ moving along a horizontal rail, where a flexible cable of unit length is attached. Moreover, the cable is assumed to hold a load mass $M$ to be transported. Taking into consideration the effect of time-delay in the boundary,  the system is governed by the following coupled PDE-ODEs (for more details about the model, the reader is referred to \cite{ac,ANBCR})
\begin{equation}
\left\{
\begin{array}
[c]{ll}%
y_{tt}(x,t)-\left(  ay_{x}\right)  _{x}(x,t)+{\mathcal{F}}(x,t)=0, & 0<x<1,\;t>0,\\
my_{tt}(0,t)-\left(  ay_{x}\right)  (0,t)=\alpha y_{t}(0,t-\tau)-\beta y_{t}(0,t), & t>0,\\
My_{tt}(1,t)+\left(  ay_{x}\right)  (1,t)=0, & t>0,\\
y(x,0)=y_{0}(x),\;y_{t}(x,0)=y_{1}(x), & x \in (0,1),\\
y_{t}(0,t-\tau )=f(t-\tau), & t \in(0,\tau),
\end{array}
\right. \label{(1.1)}%
\end{equation}
in which $y$ is the displacement of the cable,  ${\mathcal{F}}(x,t)$ is the external controlling force (distributed control) which drives the platform along the rail. Furthermore, $\alpha >0, \, \beta>0$, and $\tau >0$ is the time-delay, whereas the space variable coefficient $a(x)$ is the modulus of tension of the cable.

We would like to point out that the feedback gain $\alpha$ is assumed to be positive throughout this article just for sake of simplicity. Otherwise, one can suppose that $\alpha \in \R\setminus \{0\}$ and in this case it suffices to replace $\alpha$ by $|\alpha|$ in the whole paper. In turn, the modulus of tension of the cable $a(x)$ satisfies the following standard conditions (see \cite{ANBCR})
\begin{equation}
\left\{
\begin{array}
[c]{l}%
a\in H^{1}(0,1);\\
\text{there exists a positive constant}\;a_{0}\;\text{such that}\;a(x) \geq
a_{0}>0 \;\;\text{for all}\;\;x \in\lbrack 0,1 \rbrack.
\end{array}
\right.
\label{1.3}
\end{equation}

Overhead cranes have been extensively studied by many authors which gave rise to a huge number of research articles (see for instance \cite{ac,ANBCR,AC,mif2,cos,mif1,Ra}). A pretty comprehensive literature review has been conducted in \cite{ac}, where the reader can have a broad idea about stabilization outcomes achieved in different physical situations. Nonetheless, the effect of the presence of a delay term has been disregarded in those works and this has motivated the authors in \cite{ac} to treat such a case. Specifically, the system (\ref{(1.1)}) with ${\bm {\mathcal F}=0}$ has been considered in \cite{ac}, where it has been showed that the solutions can be driven to an equilibrium state with a polynomial rate of convergence in an appropriate functional space. On the other hand, it has been proved that exponential convergence of solutions cannot be reached when ${\bm {\mathcal F}=0}$.

In this article, we go a step further and improve the convergence result of \cite{ac} through the action of the additional distributed (interior) control
\begin{equation}
{\mathcal F}(x,t)=\sigma y_t(x,t),
\label{F}
\end{equation}
where $\sigma >0$. Specifically, the main contribution of the present work is to show that despite the presence of the delay term in one boundary condition of (\ref{(1.1)}),  the solutions of the closed-loop system (\ref{(1.1)})-(\ref{F}) converge {\bf exponentially} to an equilibrium state which depends on the initial conditions. It is also worth mentioning that another interesting finding of the current work is to prove that the presence of the boundary velocity (in addition of the interior control) is crucial for the convergence of solutions. Indeed, a non-convergence result is established when the boundary velocity is omitted ($\beta =0$ in (\ref{(1.1)})) despite the action of the interior control (\ref{F}). This shows that the ''destabilizing'' effect of the delay term in (\ref{(1.1)}) cannot be compensated through the action of {\bf only} the interior control.

Now, let us briefly present an overview of this paper. In Section \ref{sect2a}, we present a comprehensive study of the closed-loop system (\ref{(1.1)})-(\ref{F}) without boundary velocity ($\beta=0$). The study includes, inter alia, well-posedness and more importantly a non-convergence result for the system with $\beta=0$. In turn, an exponential convergence outcome is proved for a shifted system associated to (\ref{(1.1)})-(\ref{F}) with $\beta=0$. In Section \ref{sect2}, we go back to our initial problem (\ref{(1.1)})-(\ref{F}) and prove that it is indeed well-posed in the sense of semigroups theory of linear operators. LaSalle's principle is used in Section \ref{sect3} to show that the solutions of the closed-loop system (\ref{(1.1)})-(\ref{F}) converge to an equilibrium state whose expression is explicitly determined. In Section \ref{sect4}, the convergence of solutions to the stationary position is showed to be exponential. The proof is based on the frequency domain method. Lastly, the paper ends with a conclusion.
%%%%%%%%%%%%%%%%%%%%%%%%%%%%%%%%%%%%%%%%%%%%%%%%%%%%%%%%%%%
%%%%%%%%%%%%%%%%%%%%BETA ZERO %%%%%%%%%%%%%%%%%%%%%%%%%%%%%

\section{The closed-loop system (\ref{(1.1)})-(\ref{F})  without boundary velocity}
\label{sect2a}
\setcounter{equation}{0}
The main concern of this section is to provide a complete study of the  closed-loop system (\ref{(1.1)})-(\ref{F}) with $\beta=0$, namely,
\begin{equation}
\left\{
\begin{array}
[c]{ll}%
y_{tt}(x,t)-\left(  ay_{x}\right)  _{x}(x,t)+\sigma y_t(x,t)=0, & 0<x<1,\;t>0,\\
my_{tt}(0,t)-\left(  ay_{x}\right)  (0,t)=\alpha y_{t}(0,t-\tau), & t>0,\\
My_{tt}(1,t)+\left(  ay_{x}\right)  (1,t)=0, & t>0,\\
y(x,0)=y_{0}(x),\;y_{t}(x,0)=y_{1}(x), & x \in (0,1),\\
y_{t}(0,t-\tau )=f(t-\tau), & t \in(0,\tau).
\end{array}
\right. \label{(01.1)}%
\end{equation}
Primary emphasis is placed on the study of the asymptotic behavior of solutions to (\ref{(01.1)}). In fact, we shall prove that the solutions of (\ref{(01.1)}) do not converge to their equilibrium state. This outcome justifies why the boundary velocity $y_t(0,t)$ has to be present in (\ref{(01.1)}) if one would like to have a convergence result.

\subsection{Well-posedness of the system (\ref{(01.1)})}
\label{sect02}
\setcounter{equation}{0}

Thanks to the useful change of variables \cite{da} $$u(x,t)=y_{t}(0,t-x\tau),$$ the system (\ref{(01.1)}) writes
\begin{equation}
\left\{
\begin{array}
[c]{ll}%
y_{tt}(x,t)-(ay_{x})_{x}(x,t) + \sigma y_t(x,t) =0, & (x,t)\in(0,1)\times(0,\infty),\\
\tau u_{t}(x,t)+u_{x}(x,t)=0, & (x,t)\in(0,1)\times(0,\infty),\\[1mm]%
my_{tt}(0,t)-\left(  ay_{x}\right)  (0,t)=\alpha u(1,t), & t>0,\\
My_{tt}(1,t)+\left(  ay_{x}\right)  (1,t)=0, & t>0,\\
y(x,0)=y_{0}(x),\;y_{t}(x,0)=y_{1}(x), & x\in(0,1),\\
u(x,0)=y_{t}(0,-x\tau)=f(-x\tau), & x\in(0,1).
\end{array}
\right. \label{03}%
\end{equation}
Then,  consider the state variable $\Phi=(y,z,u,\xi,\eta),$ where $z(\cdot,t)=y_{t}(\cdot,t),\;\xi (t)=y_{t}(0,t)$ and $\eta (t)=y_{t}(1,t)$. Next, the state space of our system is
\[
{\mathcal{H}}_0=H^{1}(0,1)\times L^{2}(0,1)\times L^{2}(0,1)\times\mathbb{R}^{2},
\]
equipped with the following real inner product
\begin{equation}
\begin{array}
[c]{l}%
\langle(y,z,u,\xi,\eta),(\tilde{y},\tilde{z},\tilde{u},\tilde{\xi},\tilde
{\eta})\rangle_{\mathcal{H}_0}=\displaystyle\int_{0}^{1}\left(  ay_{x}\tilde
{y}_{x}+z\tilde{z}\right) \, dx+K \tau\int_{0}^{1} u \tilde{u} \,dx+m \xi \tilde{\xi
}+M\eta\tilde{\eta}\\
+\displaystyle \varpi \left[  \int_{0}^{1}(\sigma y+z) dx+m \xi+M \eta-\alpha y(0)+\tau
\alpha\int_{0}^{1} u \, dx \right] \\
\hspace{5.7cm} \displaystyle \times \left[  \int_{0}^{1} (\sigma \tilde{y}+\tilde{z}) \, dx+m \tilde{\xi
}+M \tilde{\eta}-\alpha{\tilde{y}}(0)+\tau \alpha \int_{0}^{1}\tilde{u} \, dx \right],
\label{0ip}%
\end{array}
\end{equation}
where $K>0$ satisfies the condition
\begin{equation}
\alpha \leq K,\label{0uni}%
\end{equation}
and $\varpi$ is a positive constant sufficiently small so that the norm induced by  (\ref{0ip}) is equivalent to the usual norm of ${\mathcal{H}}_0$. The proof of this claim runs on much the same lines as that of the article \cite{ac} and hence the details are omitted.

Now, one can write the closed-loop system (\ref{03}) as follows
\begin{equation}
\left\{
\begin{array}
[c]{l}%
{\Phi}_t (t)={\mathcal{A}_0}\Phi(t),\\
\Phi(0)=\Phi_{0},
\end{array}
\right. \label{0si}%
\end{equation}
in which $\Phi=(y,z,u,\xi,\eta)$, $\Phi_{0}=(y_{0},y_{1},f(-\tau\cdot),\xi_{0},\eta_{0})$ and
 ${\mathcal{A}_0}$ is a linear operator defined by
\begin{equation}%
\begin{array}
[c]{l}%
{\mathcal{D}}({\mathcal{A}_0})=\left\{  (y,z,u,\xi,\eta)\in{\mathcal{H}}_0;y\in
H^{2}(0,1),\;\;z,u\in H^{1}(0,1),\;\;\xi=u(0)=z(0),\;\eta=z(1)\right\}  ,\\
\displaystyle{{\mathcal{A}_0}}(y,z,u,\xi,\eta)=\left(  z,(ay_{x})_{x}-\sigma z,-\frac{u_{x}}{\tau},\frac{1}{m}\left[  (ay_{x})(0)+\alpha u({1})\right],-\dfrac{(ay_{x})(1)}{M}\right),\label{01.62}
\end{array}
\end{equation}
for each $(y,z,u,\xi,\eta)\in {\mathcal{D}}({\mathcal{A}_0})$.

The well-posedness result is stated below:
\begin{theo}
Assume that (\ref{1.3}) and (\ref{0uni}) hold. Then, the operator ${\mathcal{A}_0}$ defined by (\ref{01.62}) is densely defined in ${\mathcal{H}}_0$ and generates  a $C_{0}$-semigroup $e^{t{\mathcal{A}_0}}$ on ${\mathcal{H}}_0$. Consequently, if $\Phi_{0}\in{\mathcal{D}}({\mathcal{A}_0})$, then the solution $\Phi$ of (\ref{0si}) is strong and belongs to $C([0,\infty);{\mathcal{D}}({\mathcal{A}_0})\cap C^{1}([0,\infty);\mathcal{H}_0)$. However, for any initial condition $\Phi_{0}\in{\mathcal{H}}_0$, the system (\ref{0si}) has a unique mild solution $\Phi\in C([0,\infty);\mathcal{H}_0)$.
\label{(0t1)}
\end{theo}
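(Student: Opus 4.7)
The plan is to apply the Lumer--Phillips theorem in its shifted version: show that $\mathcal{A}_0 - cI$ is $m$-dissipative in $(\mathcal{H}_0, \langle\cdot,\cdot\rangle_{\mathcal{H}_0})$ for some $c > 0$, which yields that $\mathcal{A}_0$ generates a $C_0$-semigroup with $\|e^{t\mathcal{A}_0}\| \leq e^{ct}$. The whole argument hinges on the non-standard inner product (\ref{0ip}): the quadratic form added with weight $\varpi$ is precisely the square of the first integral conserved by the flow, which is what makes the boundary delay term harmless. Density of $\mathcal{D}(\mathcal{A}_0)$ in $\mathcal{H}_0$ is routine and reduces to the density of smooth test tuples satisfying the point constraints $\xi = u(0) = z(0)$ and $\eta = z(1)$.

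For the dissipativity estimate, I would first compute $\langle \mathcal{A}_0 \Phi, \Phi\rangle_{\mathcal{H}_0}$ by integration by parts. Integration by parts in $\int_0^1 a y_x z_x \, dx + \int_0^1 (a y_x)_x z \, dx$ produces the boundary terms $(ay_x)(1)\eta - (ay_x)(0)\xi$, which cancel against the contributions from $m\xi\,\xi_t$ and $M\eta\,\eta_t$; the transport term $K\tau\int u\,(-u_x/\tau)\,dx$ integrates to $\tfrac{K}{2}\xi^2 - \tfrac{K}{2}u(1)^2$; and a direct calculation (using precisely the structure of $\mathcal{A}_0$) shows that the linear functional
\[
Q(\Phi) := \int_0^1(\sigma y + z)\,dx + m\xi + M\eta - \alpha y(0) + \tau\alpha\int_0^1 u\,dx
\]
satisfies $Q(\mathcal{A}_0\Phi) = 0$, so the $\varpi$-term contributes nothing. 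What remains is
\[
\langle \mathcal{A}_0\Phi,\Phi\rangle_{\mathcal{H}_0} = -\sigma\|z\|^2_{L^2} - \tfrac{K}{2}u(1)^2 + \tfrac{K}{2}\xi^2 + \alpha\,\xi\,u(1).
\]
Applying Young's inequality to $\alpha\xi u(1)$ and using (\ref{0uni}) absorbs the $u(1)^2$ contribution, leaving a term proportional to $\xi^2$, which is controlled by $c\|\Phi\|_{\mathcal{H}_0}^2$ through the $m\xi^2$ summand of the norm. This gives $\langle (\mathcal{A}_0 - cI)\Phi,\Phi\rangle_{\mathcal{H}_0} \leq 0$.

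For the range condition I would solve $(\lambda I - \mathcal{A}_0)\Phi = F$ for $F = (f_1,f_2,f_3,f_4,f_5)\in \mathcal{H}_0$ and $\lambda > c$. The second and fifth components give $z = \lambda y - f_1$ and $\eta = \lambda y(1) - f_1(1)$; the transport equation $\lambda u + \tau^{-1} u_x = f_3$ with $u(0)=\xi=\lambda y(0) - f_1(0)$ is solved explicitly by an exponential integrating factor, yielding in particular an expression for $u(1)$ in terms of $y(0)$ and $f_3$. Substituting everything into the remaining wave equation and the two ODE boundary conditions produces a variational problem for $y\in H^1(0,1)$ of the form
\[
\int_0^1 a y_x v_x\,dx + \lambda^2 \int_0^1 y v\,dx + \lambda\sigma\int_0^1 y v\,dx + m\lambda^2 y(0)v(0) + M\lambda^2 y(1)v(1) = \langle G(F,\lambda), v\rangle,
\]
to which I would add the $\lambda$-dependent boundary coefficient coming from $u(1)$. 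For $\lambda$ large this bilinear form is continuous and coercive on $H^1(0,1)$ endowed with the obvious product-norm at the boundary points, so Lax--Milgram provides a unique solution $y$, from which $z$, $u$, $\xi$, $\eta$ are recovered and shown to lie in $\mathcal{D}(\mathcal{A}_0)$.

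The main obstacle I anticipate is the range step: the coupling of the transport equation to the wave boundary condition at $x=0$ introduces an $e^{-\lambda\tau}$-type factor in the boundary coefficient multiplying $y(0)$, and one must check carefully that the resulting bilinear form remains coercive (or at worst, coercive plus compact, so that Fredholm alternative with the injectivity given by dissipativity still applies). Once surjectivity of $\lambda I - \mathcal{A}_0$ is established for one (hence, by standard perturbation, for all sufficiently large) $\lambda > c$, the generation statement and the regularity of strong and mild solutions follow from Lumer--Phillips and the classical semigroup theory.
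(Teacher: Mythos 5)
Your proposal is correct and follows essentially the same route as the paper: the same dissipativity computation with the weighted inner product (including the observation that the $\varpi$-term vanishes because the conserved functional annihilates $\mathcal{A}_0\Phi$ on the domain), the same Young's-inequality absorption under $\alpha\le K$, the same shift by a multiple of the identity, and the same Lax--Milgram reduction for the range condition followed by Lumer--Phillips and bounded perturbation. If anything, you are slightly more careful than the paper on two points it glosses over, namely controlling $\xi^2$ by the norm via the $m\xi^2$ summand and the possible loss of coercivity from the $e^{-\lambda\tau}$ boundary coefficient for small $\lambda$.
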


\begin{proof}
Given $\Phi=(y,z,u,\xi,\eta)\in{\mathcal{D}}({{\mathcal{A}_0}}),$ and using (\ref{0ip}) as well as (\ref{01.62}), we obtain after simple integrations by parts
\[
\langle{{\mathcal{A}_0}}\Phi,\Phi)\rangle_{{\mathcal{H}}_0}=\displaystyle -\sigma \int_{0}^{1}  z^{2} \,dx+(ay_{x}
)(1)z(1)-(ay_{x})(0)z(0)-\dfrac{K}{2}(u^{2}(1)-u^{2}(0))+\xi(ay_{x})(0)+\alpha\xi u(1)
\]%
\[
-\eta(ay_{x})(1)+\varpi\left(  \int_{0}^{1}z\,dx+\tau\alpha\int_{0}%
^{1}u\,dx+m\xi+M\eta\right)  \left(  \alpha u(0)-\alpha z(0)\right)
\]%
\[
=\displaystyle -\sigma \int_{0}^{1}  z^{2} \,dx+\alpha\xi u(1)-\frac{K}{2}u^{2}(1)+\dfrac{K}{2}u^{2}(0).
\]
Applying Young's inequality, we have
\begin{equation}
\langle{{\mathcal{A}_0}}\Phi,\Phi)\rangle_{{\mathcal{H}}_0} \leq \displaystyle -\sigma \int_{0}^{1}  z^{2} \,dx
+\dfrac{K+\alpha}{2}  \xi^{2}+\displaystyle\frac{\alpha-K}{2}u^{2}(1)\label{0dis1}.
\end{equation}
Thenceforth, the operator
\begin{equation}
{\mathcal{P}}:={{\mathcal{A}_0}}-\dfrac{K+\alpha}{2}I
\label{pp}
\end{equation}
is dissipative due to the assumption (\ref{0uni}).

Now, let us show that the operator $\lambda I-{\mathcal{P}}$ is onto $\mathcal{H}_0$ for $\lambda>0$. To do so, let $(f,g,v,p,q)\in{\mathcal{H}}_0$, and let us look for $(y,z,u,\xi,\eta)\in{\mathcal{D}}({\mathcal{A}_0})$ such that $(\lambda I-{{\mathcal{A}_0}})(y,z,u,\xi,\eta)=(f,g,v,p,q)$. This problem reduces to
\begin{equation}
\left\{
\begin{array}
[c]{l}%
\lambda (\lambda+\sigma) y-(ay_{x})_{x}=(\lambda+\sigma) f+g,\\
\lambda\left[ m\lambda-\alpha e^{-{\tau}\lambda}\right]
y(0)-(ay_{x})(0)=mp+(m\lambda-\alpha e^{-{\tau}\lambda})f(0)+\displaystyle{\tau}\alpha\int_{0}^{1}e^{-{\tau}\lambda
(1-s)}v(s)\,ds,\\
\lambda^{2}My(1)+(ay_{x})(1)=Mq+\lambda Mf(1),
\end{array}
\right. \label{0ra1}%
\end{equation}
whose weak formulation is
\[
\displaystyle\int_{0}^{1}\left[  (\lambda+\sigma) \phi y+ay_{x}\phi_{x}\right] \, dx+\lambda\left[  m\lambda-\alpha e^{-{\tau}\lambda}\right]
y(0)\phi(0)+\lambda^{2}My(1)\phi(1)
\]%
\[
=\int_{0}^{1}\left[ (\lambda+\sigma) f+g \right]  \phi\,dx+\left[  mp+(m\lambda -\alpha e^{-{\tau}\lambda})f(0)+\displaystyle{\tau}\alpha\int_{0}%
^{1}e^{-{\tau}\lambda(1-s)}v(s)\,ds\right]  \phi(0)
\]%
\[+\left[  Mq+\lambda Mf(1)\right]  \phi(1).\]
Applying Lax-Milgram Theorem \cite{br}, there exists a unique solution $y\in H^{2}(0,1)$ of (\ref{0ra1}), for $\lambda>0$ and hence the range of $\lambda I-{\mathcal{A}_0}$ is ${\mathcal{H}}_0$. Now, since  ${\mathcal{P}}:={{\mathcal{A}_0}}-\dfrac{K+\alpha}{2}I$, one can deduce that $\lambda I-{\mathcal{P}}$ is also onto ${\mathcal{H}}_0$. Combining the latter with (\ref{0dis1}) and evoking semigroups theory \cite{Pa:83}, the operator ${\mathcal{P}}$ generates generates on ${\mathcal{H}}_0$ a $C_{0}$-semigroup of contraction $e^{t {\mathcal{P}}}$. Lastly, utilizing classical results of bounded perturbations \cite{Pa:83}, we conclude that ${\mathcal{A}_0}$ also generates a $C_{0}$-semigroup on ${\mathcal{H}}_0$. The other assertions of the theorem directly follow from semigroups theory of linear operators \cite{Pa:83}.
\end{proof}

%%%%%%%%%%%%%%%%%%%%%%%%%%%%%%%%%%%%%%%%%%%%%%%%%%%%

\begin{rem}
Obviously, $\lambda=0$ is an eigenvalue of ${\mathcal{A}_0}$ whose eigenfunction is $(C,0,0,0,0)$, where $C \in\mathbb{R}
\setminus\{0\}$. Thus, the system (\ref{0si}) is not stable. Notwithstanding, one may expect a convergence result of the solutions of (\ref{0si}) to some equilibrium state $(C,0,0,0,0)$ as in \cite{ac} (see also \cite{ac2}). It turned out that this is not the case. The proof of this claim will be given in the next section.
\label{rem1}
\end{rem}

%%%%%%%%%%%%%%%%%%%%%%%%%%%%%%%%%%%%%%%%%%%%%%%%%%%%%%%%
%%%%%%%%%%%%%%%%%%%%%%%%%%%%%%%%%%%%%%%%%%%%%%%%%%%%%%%%
\subsection{Lack of convergence}
\label{sect9}
As stated before, we shall prove that the solutions of the solutions of the system (\ref{(01.1)}) do not converge to their equilibrium state $(C,0,0,0,0)$. First, consider the following subspace
\begin{equation}
\tilde{\mathcal{H}}_0 = \left\{ (y,z,u,\xi,\eta) \in {\mathcal{H}}_0; \, \int_{0}^{1} \left( \sigma y(x)+z(x) +\alpha\tau u(x) \right) \, dx -\alpha \, y(0) + m \xi+ M \eta= 0 \right\},
\label{h0}
\end{equation}
and the operator
\[
\tilde{\mathcal{A}}_0 : \mathcal{D}(\tilde{\mathcal{A}}_0) := \mathcal{D}({\mathcal{A}_0})
\cap \tilde{\mathcal{H}}_0 \subset\tilde{\mathcal{H}}_0 \rightarrow \tilde{\mathcal{H}}_0,
\]
\begin{equation}
\label{til0}
\tilde{\mathcal{A}}_0 (y,z,u,\xi,\eta) = \mathcal{A}_0 (y,z,u,\xi,\eta), \; \mbox{for any} \; (y,z,u,\xi,\eta) \in \mathcal{D}(\tilde{\mathcal{A}}_0).
\end{equation}
Based on the outcomes of Section \ref{sect02}, the operator $\tilde{\mathcal{A}}_0$ generates a $C_{0}$-semigroup  $e^{t \tilde{\mathcal{A}}_0}$. Notwithstanding, $0$ is not an eigenvalue of $\tilde{\mathcal{A}}_0$.

Furthermore, in order to reach our target (the solutions of the system (\ref{(01.1)}) do not converge to their equilibrium state $(C,0,0,0,0)$), it suffices to prove that the semigroup $e^{t \tilde{\mathcal{A}}_0}$ generated by the operator $\tilde{\mathcal{A}}_0$ (see (\ref{til0})) is not stable on $\tilde{\mathcal{H}}_0$. To do so,  let assume that $a(x)=1$, for all $x \in [0,1]$. Then, consider a nonzero complex number $\lambda$ and a nonzero function of $f$ in $H^2(0,1)$. The immediate task is to seek a solution $y(x,t)=e^{\lambda t} f(x)$ of the system associated to the operator $\tilde{\mathcal{A}}_0$  on $\tilde{{\mathcal{H}}}_0$, namely,
\begin{equation}
\left\{
\begin{array}[c]{ll}%
y_{tt}(x,t)-y_{xx}(x,t)=0, & 0<x<1,\;t>0,\\
my_{tt}(0,t)-  y_{x}  (0,t)=\alpha y_{t}(0,t-\tau)-\beta y_{t}(0,t), & t>0,\\
My_{tt}(1,t)+ y_{x}  (1,t)=0, & t>0,
\label{1.6f}%
\end{array}
\right.
\end{equation}
with $ \int_{0}^{1} y_t(x,t)\, dx +\alpha \tau \int_{0}^{1} y_{t}(0,t-x \tau) \, dx + (\beta-\alpha) \, y(0) + m y_{t}(0,t)+ M y_{t}(1,t)= 0.$  A straightforward computation gives the following system
\begin{equation}
\left\{
\begin{array}{l}
f_{xx} -\left(\lambda^2+\sigma \lambda\right) f=0,\\
f_{x}(0)+\lambda \left(\alpha e^{-\lambda \tau}-m \lambda \right) f(0)=0,\\
f_{x}(1)+M \lambda^2 f(1)=0.\label{k2}%
\end{array}
\right.
\end{equation}
Solving the differential equation of (\ref{k2}) yields $y(x)=k_1 e^{-\sqrt{\lambda^2+\sigma \lambda} x}+k_2 e^{\sqrt{\lambda^2+\sigma \lambda}}$, where $k_1$ and $k_2$ are constants to be determined via the boundary conditions in (\ref{k2}). The latter leads us to claim that $f$ is a nontrivial solution of (\ref{k2}) if and only if $\lambda$ is a nonzero solution of the following equation:
\begin{eqnarray}
&&  \left[ \lambda \left( \alpha e^{-\lambda \tau} -\lambda m \right) -\sqrt{\lambda^2+\sigma \lambda} \right] \left[M \lambda^2+ \sqrt{\lambda^2+\sigma \lambda} \right] e^{\sqrt{\lambda^2+\sigma \lambda}} \nonumber\\
&-& \left[ \lambda \left( \alpha e^{-\lambda \tau} -\lambda m \right) + \sqrt{\lambda^2+\sigma \lambda} \right] \left[M \lambda^2 - \sqrt{\lambda^2+\sigma \lambda} \right] e^{-\sqrt{\lambda^2+\sigma \lambda}}=0.\label{k3}
\end{eqnarray}
For sake of simplicity, we shall pick up $\lambda=\sigma >0$. Next, we choose $\tau=\sqrt{2}$ and $M=\sqrt{2}/\sigma.$ Thereby, (\ref{k3}) yields
\[\alpha= \sqrt{2} \left(1+\frac{m}{M}\right) e^{\frac{2}{M}}.\]
Whereupon, (\ref{k3}) holds for $\lambda = \sigma >0$ and the above values of the time-delay $\tau$ as well as the physical parameters $M$ and $\alpha$. Thus, $y(x,t)=e^{\sigma t} f(x)$ is a solution of (\ref{1.6f}), which clearly does not tend to zero in $\tilde{{\mathcal{H}}}_0$. To summarize, we have showed the following result:
\begin{theo}
Let $\tau=\sqrt{2}$ and $M=\sqrt{2}/\sigma$ and $\alpha=\sqrt{2} \left(1+\frac{m}{M}\right) e^{\frac{2}{M}}.$ Although  the assumption (\ref{0uni}) hold, the solutions of (\ref{1.6f}) are unstable in $\tilde{{\mathcal{H}}}_0$. In other words,  the solutions of the closed-loop system (\ref{(01.1)}) do not converge in ${\mathcal{H}}_0$ to their equilibrium state as $t\longrightarrow+\infty$. \label{non}
\end{theo}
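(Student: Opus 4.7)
The strategy is to exhibit a strictly positive real eigenvalue of $\tilde{\mathcal{A}}_0$; its eigenfunction will then produce a solution of (\ref{1.6f}) that blows up in $\tilde{\mathcal{H}}_0$. I would first argue that any eigenvector of $\mathcal{A}_0$ attached to a nonzero eigenvalue automatically lies in $\tilde{\mathcal{H}}_0$. Indeed, a short calculation on $\mathcal{D}(\mathcal{A}_0)$, using the compatibility $u(0)=z(0)=\xi$ and integrating the $(ay_{x})_{x}$ term by parts, shows that the linear functional
\[
\mathcal{F}(y,z,u,\xi,\eta) := \int_{0}^{1}\bigl(\sigma y + z + \alpha\tau u\bigr)\,dx \;-\; \alpha\,y(0) \;+\; m\,\xi \;+\; M\,\eta
\]
satisfies $\mathcal{F}(\mathcal{A}_0\Phi)=0$ for every $\Phi\in\mathcal{D}(\mathcal{A}_0)$. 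Hence $\mathcal{A}_0\Phi=\lambda\Phi$ with $\lambda\neq 0$ forces $\lambda\,\mathcal{F}(\Phi)=0$, i.e.\ $\Phi\in\tilde{\mathcal{H}}_0$. It therefore suffices to find a positive $\lambda$ and a nontrivial $f$ solving (\ref{k2}).

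With $a\equiv 1$ and $\mu := \sqrt{\lambda^{2}+\sigma\lambda}$, the ODE of (\ref{k2}) admits the general solution $f(x)=k_{1}e^{-\mu x}+k_{2}e^{\mu x}$. Plugging this into the two remaining boundary conditions of (\ref{k2}) yields a $2\times 2$ homogeneous linear system for $(k_{1},k_{2})$ whose vanishing determinant is precisely (\ref{k3}). I would then try to realize $\lambda=\sigma$ as a root. This choice gives $\mu=\sqrt{2}\,\sigma$; selecting $M=\sqrt{2}/\sigma$ produces $M\lambda^{2}-\mu=0$, which annihilates the second summand of (\ref{k3}) altogether and leaves the single scalar relation $\lambda\bigl(\alpha e^{-\lambda\tau}-m\lambda\bigr)=\mu$. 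Picking $\tau=\sqrt{2}$, so that $\lambda\tau=2/M$, and solving linearly for $\alpha$ then yields exactly $\alpha=\sqrt{2}\bigl(1+m/M\bigr)\,e^{2/M}$.

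For these parameter values $\lambda=\sigma>0$ is thus an eigenvalue of $\mathcal{A}_0$; by the first paragraph its eigenfunction $\Phi$ lies in $\tilde{\mathcal{H}}_0$, and the associated trajectory $y(x,t)=e^{\sigma t}f(x)$ of (\ref{1.6f}) has $\tilde{\mathcal{H}}_0$-norm $e^{\sigma t}\,\|\Phi\|_{\tilde{\mathcal{H}}_0}\to+\infty$, so $e^{t\tilde{\mathcal{A}}_0}$ is unstable. Using the decomposition of $\mathcal{H}_0$ into the kernel $\mathbb{R}\,(1,0,0,0,0)$ of $\mathcal{A}_0$ and its complement $\tilde{\mathcal{H}}_0$, the same trajectory cannot approach any equilibrium $(C,0,0,0,0)$ of (\ref{(01.1)}). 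The step I expect to be genuinely delicate is the parameter tuning: a single transcendental equation mixing polynomial and exponential dependence on $\lambda$ has to be solved through a judicious \emph{sequential} choice of $(\tau,M,\alpha)$, first killing one bracket via $M\lambda^{2}=\mu$ and only then solving linearly for $\alpha$. By comparison, the \emph{a posteriori} verification that the resulting eigenvector belongs to $\tilde{\mathcal{H}}_0$ is automatic once the conserved functional $\mathcal{F}$ has been identified.
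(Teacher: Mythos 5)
Your proposal is correct and follows essentially the same route as the paper: seek $y(x,t)=e^{\lambda t}f(x)$ with $\lambda=\sigma>0$, reduce to the characteristic equation (\ref{k3}), and tune the parameters sequentially (choosing $M=\sqrt{2}/\sigma$ so that $M\lambda^{2}-\sqrt{\lambda^{2}+\sigma\lambda}=0$, then $\tau=\sqrt{2}$, then solving linearly for $\alpha$), arriving at exactly the paper's values. The only difference is that you explicitly check membership of the eigenvector in $\tilde{\mathcal{H}}_0$ via the conserved functional $\mathcal{F}$ --- a step the paper leaves implicit --- and that verification is correct.
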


%%%%%%%%%%%%%%%%%%%%%%%%%%%%%%%%%%%

\subsection{Asymptotic behavior result of $e^{t\mathcal{P}}$}
\label{sect03}
In this section, we are going to study the asymptotic behavior of the semigroup $e^{t\mathcal{P}}$ generated by the operator $\mathcal{P}$ (see (\ref{pp})) which has been evoked in the proof of Theorem \ref{(0t1)}. First, going back to the proof of Theorem \ref{(0t1)}, one can claim that the operator $\left(\lambda I-{\mathcal{P}} \right)  ^{-1}$ exists and maps ${\mathcal{H}}_0$ into ${\mathcal{D}}({\mathcal{P}})$. Thereafter,  Sobolev embedding \cite{ad} leads to assert that $\left(  \lambda I-{\mathcal{P}} \right)^{-1}$ is compact and hence the spectrum of ${\mathcal{P}}$ consists of isolated eigenvalues of finite algebraic multiplicity only \cite{ka}. Moreover, the trajectories set of solutions $\{\Phi(t)\}_{\scriptscriptstyle t\geq0}$ of the shifted system
\begin{equation}
{\Phi}_t (t)={\mathcal{P}} \Phi(t), \;\; \Phi(0)=\Phi_{0}\label{0b}
\end{equation}
is bounded for the graph norm and thus precompact by means of the compactness of the operator $\left(\lambda I-\mathcal{P}\right)  ^{-1}$, for $\lambda>0$. Now, let  $\Phi(t)=\left(  y(t),y_{t} (t),u(t),\xi(t),\eta(t)\right)  =e^{t\mathcal{P}} \Phi_{0}$ be the solution of (\ref{0b})  stemmed from $\Phi_{0}=\left(  y_{0},y_{1},f,\xi_{0},\eta_{0}\right) \in {\mathcal{D}}({\mathcal{P}})$. By virtue of LaSalle's principle \cite{HA}, it follows that the $\omega$-limit set
$$
\omega \left( \Phi_0 \right)=
\left \lbrace \Psi  \in \mathcal{H}; \; \mbox{there exists a sequence} \; t_n \to \infty \;\;\mbox{as}\;\; n \to \infty \; \mbox{such that} \; \Psi=\lim_{{\scriptstyle n \to \infty}} e^{t_n {\mathcal{P}}} \Phi_0  \right \rbrace $$
is non empty, compact, invariant under the semigroup $e^{t{\mathcal{P}}}$. Moreover, the solution $e^{t{\mathcal{P}}}\Phi _{0}\longrightarrow\omega\left(  \Phi_{0}\right)  \;$ as $t\rightarrow \infty\,$ \cite{HA}. Next, let $\tilde{\Phi}_{0}=\left(  \tilde{y}_{0},\tilde {y}_{1},\tilde{f},\tilde{\xi},\tilde{\eta}\right)  \in\omega\left(  \Phi _{0}\right)  \subset{D}({\mathcal{P}})$ and consider $\tilde{\Phi}(t)=\left(
\tilde{y}(t),\tilde{y}_{t}(t),\tilde{u}(t),\tilde{\xi}(t),\tilde{\eta }(t)\right)  =e^{t{\mathcal{P}}}\tilde{\Phi}_{0}\in{D}({\mathcal{P}})$ as the unique strong solution of (\ref{0si}). Since  $\Vert\tilde{\Phi}(t)\Vert_{\mathcal{H}_0}$ is constant \cite[Theorem 2.1.3 p. 18]{HA}, we have
\[
<{\mathcal{P}} \tilde{\Phi},\tilde{\Phi}>_{\mathcal{H}_0}=0.
\]
This, together with (\ref{0dis1}), gives $\tilde{z}=\tilde{y}_t =0$,  and $\tilde{u}(1)=\tilde{y}_{t}(0,t-\tau)=0$ as long as $K > \alpha$. Consequently, $\tilde{y}$ is constant and hence the $\omega$-limit set $\omega\left(  \Phi_{0}\right)  $ reduces to $(\zeta,0,0,0,0)$. Now, it remains to find $\zeta$. To do so, let $(\zeta,0,0,0,0) \in \omega \left(  \Phi_{0}\right)$, which yields
\begin{equation}
\Phi(t_{n})=(y(t_{n}),y_{t}(t_{n}),u(t_{n}),\xi(t_{n}),\eta(t_{n}))=e^{t_{n} {\mathcal{P}}}\Phi_{0}\longrightarrow(\zeta
,0,0,0,0), \, \text{for some} \, t_{n} \rightarrow\infty, \text{as} \, n \rightarrow\infty.
\label{boum1}%
\end{equation}
In turn, any solution of (\ref{0b}) stemmed from $\Phi_{0}=(y_{0},y_{1},f,\xi_{0},\eta_{0})$ verifies
\[
\frac{\text{d}}{\text{dt}}\left[ \int_{0}^{1} \left( \sigma y(x,t)+ y_{t}(x,t)+\alpha \tau y_{t}(0,t-x\tau) \right) \,dx+my_{t}(0,t)+My_{t}(1,t)-\alpha y(0,t) \right]=0,
\]
for each $t\geq0$, and thus
\begin{align}
& \int_{0}^{1} \left( \sigma y(x,t)+ y_{t}(x,t)+\alpha \tau y_{t}(0,t-x\tau) \right)\,dx + my_{t}(0,t)+My_{t}(1,t)-\alpha y(0,t)\nonumber\\
& =\int_{0}^{1} \left( \sigma y(x,0)+ y_{t}(x,0)+\alpha \tau y_{t}(0,-x\tau) \right) \,dx+my_{t}(0,0)+My_{t}(1,0)-\alpha y(0,0)\nonumber\\
& =\int_{0}^{1} \left( \sigma y_0 (x)+ y_{1}(x)+\alpha \tau f(-x\tau) \right) \,dx+m\xi_{0}+M\eta_{0}-\alpha y_{0}(0).\label{bou1}%
\end{align}
Finally, the desired value of $\zeta$ can be obtained by simply letting $t=t_{n}$ in (\ref{bou1}) with $n\rightarrow\infty$, and then using (\ref{boum1}).

Whereupon, the following result has been showed:
\begin{theo}
Assume that (\ref{1.3}) holds and $K$ satisfies $K > \alpha $. Then, for any initial data $\Phi_{0}=(y_{0},y_{1},f,\xi_{0},\eta_{0}) \in {\mathcal{H}}_0$, the solution $\Phi(t)=\biggl(y,y_{t},y_{t}(0,t-x\tau),y_{t}(0,t),y_{t}(1,t)\biggr)$ of (\ref{0b}) tends in ${\mathcal{H}}_0$ to $(\zeta,0,0,0,0)$ as $t\longrightarrow+\infty$, where
\begin{equation}
\zeta=\displaystyle -\frac{1}{\alpha}\displaystyle\left[  \int_{0}^{1} \left( \sigma y_0 (x)+ y_{1} (x)+\alpha \tau f(-\tau x) \right)\,dx-\alpha y_{0}(0)+m\xi _{0}+M\eta_{0}\right]  .\label{0cste}%
\end{equation}
\label{0t2}
\end{theo}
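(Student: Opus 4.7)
My strategy is the classical LaSalle approach: establish precompactness of orbits, localize the $\omega$-limit set to a one-dimensional family, then identify the limit via a conservation law. For precompactness, because $\mathcal{D}(\mathcal{P}) = \mathcal{D}(\mathcal{A}_0)$ requires $y \in H^2(0,1)$ and $z,u \in H^1(0,1)$, the graph-norm ball embeds compactly into $\mathcal{H}_0$ by Rellich--Kondrachov; hence $(\lambda I - \mathcal{P})^{-1}$ is compact for $\lambda > 0$ and, combined with the contractivity of $e^{t\mathcal{P}}$ from Theorem \ref{(0t1)}, every orbit $\{e^{t\mathcal{P}}\Phi_0\}_{t\ge 0}$ is bounded in graph norm (for $\Phi_0 \in \mathcal{D}(\mathcal{P})$) and thus precompact in $\mathcal{H}_0$; density extends this to arbitrary $\Phi_0 \in \mathcal{H}_0$.

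LaSalle's invariance principle then yields a non-empty, compact, invariant $\omega$-limit set $\omega(\Phi_0) \subset \mathcal{H}_0$ towards which the orbit converges. For any $\tilde\Phi_0 \in \omega(\Phi_0)$, invariance forces the orbit $\tilde\Phi(t) = e^{t\mathcal{P}}\tilde\Phi_0$ to stay in $\omega(\Phi_0)$, and contractivity then makes $\|\tilde\Phi(t)\|_{\mathcal{H}_0}$ constant, giving $\langle \mathcal{P}\tilde\Phi(t), \tilde\Phi(t)\rangle_{\mathcal{H}_0} \equiv 0$. Substituting into (\ref{0dis1}) (with $\mathcal{P} = \mathcal{A}_0 - \tfrac{K+\alpha}{2}I$) and using the strict bound $K > \alpha$, both $\sigma \int_0^1 \tilde z^2\,dx$ and $\tilde u(1,\cdot)^2$ must vanish. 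From the abstract ODE: $\tilde z \equiv 0$ forces $\tilde y$ to be time-independent; the transport equation $\tau \tilde u_t + \tilde u_x = 0$ together with $\tilde u(0) = \tilde\xi = \tilde z(0) = 0$ gives $\tilde u \equiv 0$; and the boundary ODEs at $x=0,1$ together with $(a\tilde y_x)_x = 0$ make $\tilde y \equiv \zeta$ constant in $x$, with $\tilde\xi = \tilde\eta = 0$. Hence $\omega(\Phi_0) \subset \{(\zeta,0,0,0,0) : \zeta \in \R\}$.

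The closed-form value of $\zeta$ comes from the scalar functional
\[
\Psi(t) := \int_0^1 \bigl(\sigma y + y_t + \alpha\tau\, y_t(0,t-x\tau)\bigr)\,dx + m y_t(0,t) + M y_t(1,t) - \alpha y(0,t),
\]
which I would show is conserved along solutions by differentiating in $t$ and combining integration by parts in $x$, the transport equation on $u$, and the two boundary ODEs at $x=0,1$. Equating $\Psi(t_n) \to -\alpha \zeta$ along a sequence realizing $\Phi(t_n) \to (\zeta,0,0,0,0)$ to $\Psi(0)$ expressed in terms of the initial data reproduces (\ref{0cste}). The only non-routine step in the plan is the propagation in the previous paragraph: extracting the individual vanishing $\tilde z \equiv 0$ and $\tilde u(1,\cdot) \equiv 0$ from the scalar identity $\langle \mathcal{P}\tilde\Phi, \tilde\Phi\rangle = 0$ requires the strict hypothesis $K > \alpha$ (rather than the $K \geq \alpha$ used earlier for mere dissipativity) so as to produce a strictly negative coefficient on $u^2(1)$ in (\ref{0dis1}).
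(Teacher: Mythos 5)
Your strategy --- compact resolvent for precompactness, LaSalle plus the dissipation estimate (\ref{0dis1}) with the strict inequality $K>\alpha$ to reduce the $\omega$-limit set to constants, and a scalar first integral to identify the constant --- is exactly the paper's route, and your first two stages match its proof essentially step for step (you even supply more detail than the paper on how $\tilde u\equiv 0$ and $\tilde y\equiv\mathrm{const}$ follow). The gap is in the last stage, the identification of $\zeta$, and you should be aware that the paper's own proof suffers from the same defect. The functional $\Psi$ is a \emph{linear} functional of the state, and a direct computation gives $\Psi(\mathcal{A}_0\Phi)=0$ for $\Phi\in\mathcal{D}(\mathcal{A}_0)$; hence $\Psi$ is conserved along solutions of the \emph{unshifted} system $\Phi_t=\mathcal{A}_0\Phi$. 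Theorem \ref{0t2}, however, concerns the shifted system (\ref{0b}) generated by $\mathcal{P}=\mathcal{A}_0-\frac{K+\alpha}{2}I$, along whose solutions $\frac{d}{dt}\Psi(\Phi(t))=\Psi(\mathcal{P}\Phi(t))=-\frac{K+\alpha}{2}\Psi(\Phi(t))$, so that $\Psi(\Phi(t))=e^{-(K+\alpha)t/2}\Psi(\Phi_0)\rightarrow 0$ rather than remaining constant. (The same bookkeeping issue affects your sentence that ``$\tilde z\equiv0$ forces $\tilde y$ to be time-independent'': for the shifted generator one gets $\tilde y_t=-\frac{K+\alpha}{2}\tilde y$.)

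Second, even where conservation does hold, the evaluation $\Psi(t_n)\to-\alpha\zeta$ drops the contribution of the term $\sigma\int_0^1 y\,dx$: since $y(t_n)\to\zeta$ in $H^1(0,1)$ (the $\mathcal{H}_0$-norm being equivalent to the usual one), the correct limit is $(\sigma-\alpha)\zeta$. Combining the two corrections, the argument as written yields $(\sigma-\alpha)\zeta=\lim_{n}\Psi(t_n)=0$, i.e.\ $\zeta=0$ for the shifted system --- consistent with $\ker\mathcal{P}=\{0\}$ (by the computation following (\ref{0dis1})) and with the exponential decay of $e^{t\tilde{\mathcal{P}}}$ asserted in Theorem \ref{0lrkv} --- and not the formula (\ref{0cste}). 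The nonzero constant (\ref{0cste}) (with the prefactor $-1/\alpha$ replaced by $1/(\sigma-\alpha)$) is what the conservation argument produces for the unshifted semigroup $e^{t\mathcal{A}_0}$. So the method is the right one, but the final paragraph of your proposal does not establish (\ref{0cste}) as stated: you must decide which generator the first integral belongs to, and retain the $\sigma\int_0^1 y\,dx$ contribution when passing to the limit.
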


\subsection{Exponential convergence result for $e^{t\mathcal{P}}$}
\label{sect04}

This subsection addresses the problem of determining the rate of convergence of solutions of the shifted system (\ref{0b}) to their equilibrium state $(\zeta,0,0,0,0)$ as $t\longrightarrow+\infty$ previously obtained (see Theorem \ref{0t2}). To proceed, we recall the following result (see Huang \cite{huang} and Pr\"{u}ss \cite{pruss}):

\begin{theo}
\label{lemraokv} There exist two positive constant $C$ and $\omega$ such that a $C_{0}$-semigroup $e^{t{\mathcal{L}}}$ of contractions on a Hilbert space ${\mathcal{H}}$ satisfies the estimate
\[
||e^{t{\mathcal{L}}}||_{{\mathcal{L}}({\mathcal{H}})} \leq C e^{-\omega t}, \; \; \text{for all} \; t>0,
\]
if and only if
\begin{equation}
\rho({\mathcal{L}})\supset\bigr\{i\gamma; \, \gamma\in\mathbb{R}\bigr\}\equiv
i\mathbb{R},\label{01.8kv}%
\end{equation}
and
\begin{equation}
\limsup_{|\gamma|\rightarrow\infty}\Vert(i\gamma I-{\mathcal{L}})^{-1}\Vert_{{\mathcal{L}}({\mathcal{H}})}<\infty,\label{01.9kv}%
\end{equation}
where $\rho({\mathcal{L}})$ denotes the resolvent set of the operator ${\mathcal{L}}$.
\end{theo}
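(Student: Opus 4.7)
The final statement is the classical Gearhart--Huang--Pr\"uss characterization of uniform exponential decay for contractive $C_{0}$-semigroups on a Hilbert space. I would prove the two implications separately, assuming the semigroup is contractive as stated.

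\emph{Necessity.} This is the easy direction. If $\|e^{t{\mathcal{L}}}\|_{{\mathcal{L}}({\mathcal{H}})} \leq C e^{-\omega t}$, then the Laplace representation
\[
R(\lambda,{\mathcal{L}})x = \int_{0}^{\infty} e^{-\lambda t}\, e^{t{\mathcal{L}}} x\,dt, \qquad x \in {\mathcal{H}},
\]
converges absolutely for $\text{Re}(\lambda) > -\omega$, so the half-plane $\{\text{Re}(\lambda) > -\omega\}$ (containing $i\mathbb{R}$) lies in $\rho({\mathcal{L}})$. Bounding the integrand by $Ce^{-\omega t}\|x\|$ yields $\|R(i\gamma,{\mathcal{L}})\|_{{\mathcal{L}}({\mathcal{H}})} \leq C/\omega$ uniformly in $\gamma \in \mathbb{R}$, which is (\ref{01.9kv}).

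\emph{Sufficiency.} This is the substantive direction. Contractivity places $\sigma({\mathcal{L}})$ in the closed left half-plane; combined with (\ref{01.8kv})--(\ref{01.9kv}) and continuity of the resolvent, the map $\gamma \mapsto \|R(i\gamma,{\mathcal{L}})\|$ is uniformly bounded on all of $\mathbb{R}$. A Neumann-series argument centered at points of $i\mathbb{R}$ then extends $R(\cdot,{\mathcal{L}})$ analytically to a strip $\{\text{Re}(\lambda) > -\delta\}$ with a uniform bound. For $x \in \mathcal{D}({\mathcal{L}})$ and $\epsilon \in (0,\delta)$, the resolvent identity $R(\lambda,{\mathcal{L}})x = \lambda^{-1}(x + R(\lambda,{\mathcal{L}}){\mathcal{L}}x)$ forces $\gamma \mapsto R(\epsilon + i\gamma,{\mathcal{L}})x$ to decay like $|\gamma|^{-1}$, so it lies in $L^{2}(\mathbb{R};{\mathcal{H}})$ with a bound independent of $\epsilon$. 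The shifted orbit $u_{\epsilon}(t) := e^{-\epsilon t}\,e^{t{\mathcal{L}}}x$ has Fourier transform $R(\epsilon + i\gamma,{\mathcal{L}})x$, so Plancherel's identity gives
\[
\int_{0}^{\infty} \|e^{-\epsilon t} e^{t{\mathcal{L}}} x\|_{{\mathcal{H}}}^{2}\,dt \;=\; \frac{1}{2\pi}\int_{-\infty}^{\infty}\|R(\epsilon + i\gamma,{\mathcal{L}}) x\|_{{\mathcal{H}}}^{2}\,d\gamma.
\]
Passing to the limit $\epsilon \downarrow 0$ (monotone convergence on the left, dominated convergence on the right via the uniform strip bound) and extending from the dense subspace $\mathcal{D}({\mathcal{L}})$ to all of ${\mathcal{H}}$ by the uniform boundedness principle yields $\int_{0}^{\infty}\|e^{t{\mathcal{L}}}x\|_{{\mathcal{H}}}^{2}\,dt \leq M\|x\|_{{\mathcal{H}}}^{2}$ for some $M>0$ independent of $x$. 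Datko's theorem then promotes this $L^{2}$-integrability of all orbits to uniform exponential decay.

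\emph{Main obstacle.} The delicate point is the Plancherel-based $L^{2}$-estimate: one must rigorously identify the Fourier--Laplace transform of $u_{\epsilon}$ with the resolvent on the vertical line $\text{Re}(\lambda) = \epsilon$, exploit the resolvent identity to secure the $|\gamma|^{-1}$ decay needed for square-integrability on $\mathcal{D}({\mathcal{L}})$, and pass to $\epsilon \downarrow 0$ using the uniform bound on a full strip to the left of $i\mathbb{R}$. Once the $x$-uniform $L^{2}$-bound on the orbits is in hand, Datko's classical theorem closes the argument routinely.
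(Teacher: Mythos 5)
First, note that the paper does not prove this statement at all: Theorem \ref{lemraokv} is quoted as a known result with references to Huang and Pr\"uss, so there is no in-paper proof to compare against and your attempt must be judged on its own. Your necessity direction is correct and standard, and the overall skeleton of the sufficiency direction (uniform resolvent bound on $i\mathbb{R}$ from \eqref{01.8kv}--\eqref{01.9kv} and continuity, Neumann-series extension to a strip, Plancherel applied to $t\mapsto e^{-\epsilon t}e^{t\mathcal{L}}x$, then Datko) is the classical Gearhart--Pr\"uss route.

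There is, however, a genuine gap at the last step of the sufficiency argument. The decay $\|R(\epsilon+i\gamma,\mathcal{L})x\|\le |\epsilon+i\gamma|^{-1}\bigl(\|x\|+C\|\mathcal{L}x\|\bigr)$ that you extract from the identity $R(\lambda,\mathcal{L})x=\lambda^{-1}(x+R(\lambda,\mathcal{L})\mathcal{L}x)$ produces, after Plancherel and $\epsilon\downarrow 0$, the bound $\int_0^\infty\|e^{t\mathcal{L}}x\|^2\,dt\le C\bigl(\|x\|^2+\|\mathcal{L}x\|^2\bigr)$ for $x\in\mathcal{D}(\mathcal{L})$ --- a bound in the \emph{graph} norm. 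Neither density nor the uniform boundedness principle upgrades this to $\le M\|x\|^2$: UBP would require knowing $\sup_n\int_0^n\|e^{t\mathcal{L}}x\|^2\,dt<\infty$ for \emph{every} $x\in\mathcal{H}$ before it yields a uniform constant, and Datko's theorem likewise requires square-integrability of every orbit, not of orbits from a dense subspace. As written, the argument therefore does not close. The standard repair stays entirely inside your framework but replaces your decay estimate by the resolvent identity $R(\epsilon+i\gamma,\mathcal{L})=\bigl(I+R(\epsilon+i\gamma,\mathcal{L})\bigr)R(1+\epsilon+i\gamma,\mathcal{L})$, which gives $\|R(\epsilon+i\gamma,\mathcal{L})x\|\le(1+M_0)\|R(1+\epsilon+i\gamma,\mathcal{L})x\|$ with $M_0:=\sup_{\Re\lambda\ge0}\|R(\lambda,\mathcal{L})\|$; a second application of Plancherel, to $t\mapsto e^{-(1+\epsilon)t}e^{t\mathcal{L}}x$, then bounds $\int_{\mathbb{R}}\|R(1+\epsilon+i\gamma,\mathcal{L})x\|^2\,d\gamma\le\pi\|x\|^2$ using only contractivity. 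This yields $\int_0^\infty\|e^{t\mathcal{L}}x\|^2\,dt\le\frac{1}{2}(1+M_0)^2\|x\|^2$ for \emph{all} $x\in\mathcal{H}$, after which Datko's theorem legitimately finishes the proof. (Alternatively, one can avoid Datko altogether by the contour representation of $t\,e^{t\mathcal{L}}$ through $R(\lambda,\mathcal{L})^2$ and Cauchy--Schwarz against the adjoint semigroup, which gives $t\|e^{t\mathcal{L}}\|\le C$ directly.)
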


Let us first define another operator on the subspace $\tilde{\mathcal{H}}_0$ (see (\ref{h0})) as follows
\[
\tilde{\mathcal{P}} : \mathcal{D}(\tilde{\mathcal{P}}) := \mathcal{D}({\mathcal{P}})
\cap \tilde{\mathcal{H}}_0 \subset\tilde{\mathcal{H}}_0 \rightarrow\tilde{\mathcal{H}}_0,
\]
\begin{equation}
\label{01.62bis}
\tilde{\mathcal{P}} (y,z,u,\xi,\eta) = {\mathcal{P}} (y,z,u,\xi,\eta), \, \forall\, (y,z,u,\xi,\eta) \in \mathcal{D}(\tilde{\mathcal{P}}).
\end{equation}

Recalling the results obtained in the previous sections, the operator $\tilde{\mathcal{P}}$ defined by (\ref{01.62bis}) generates on $\tilde{{\mathcal{H}}}_0$ a $C_{0}$-semigroup of contractions $e^{t\tilde{\mathcal{P}}}$ and its spectrum $\sigma(\tilde{\mathcal{P}})$ consists of isolated eigenvalues of finite algebraic multiplicity only.

Then, we have the following result:

\begin{theo}
\label{0lrkv} Assume that (\ref{1.3}) hold and $K$ satisfies $K>\alpha$. Then, there exist $C>0$ and $\omega >0$ such that for all $t>0$, we have
\[
\left\Vert e^{t\tilde{\mathcal{P}}}\right\Vert _{{\mathcal{L}}(\tilde{\mathcal{H}}_0)}\leq C e^{-\omega t}.
\]
\end{theo}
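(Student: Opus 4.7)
\medskip
\noindent\textbf{Proof proposal.} The natural route is to invoke the Huang--Pr\"uss characterization (Theorem \ref{lemraokv}) for the contraction semigroup $e^{t\tilde{\mathcal{P}}}$ on $\tilde{\mathcal{H}}_0$, so the task reduces to verifying the two spectral conditions (\ref{01.8kv}) and (\ref{01.9kv}) for $\mathcal{L}=\tilde{\mathcal{P}}$.

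For (\ref{01.8kv}), since $(\lambda I-\tilde{\mathcal{P}})^{-1}$ is compact, $\sigma(\tilde{\mathcal{P}})$ consists of isolated eigenvalues of finite algebraic multiplicity only, and it suffices to exclude purely imaginary eigenvalues. Assuming $\tilde{\mathcal{P}}\Phi=i\gamma\Phi$ with $\Phi=(y,z,u,\xi,\eta)\in\mathcal{D}(\tilde{\mathcal{P}})$, I would take the real part of the duality pairing with $\Phi$ and combine it with (\ref{0dis1}) and the strict inequality $K>\alpha$; this already forces $z\equiv0$ in $L^{2}(0,1)$ and $u(1)=0$. Reading the eigenvalue equation componentwise then cascades: the first line $z=(i\gamma+(K+\alpha)/2)y$ forces $y\equiv0$ (since $(K+\alpha)/2\neq0$), the explicit solution of the transport line for $u$ together with $u(1)=0$ yields $u\equiv0$ and hence $\xi=u(0)=0$, and the boundary ODE at $x=1$ produces $\eta=0$. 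Hence $\Phi=0$ and $i\gamma\in\rho(\tilde{\mathcal{P}})$.

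The core is (\ref{01.9kv}), which I would establish by the classical contradiction scheme. Suppose there exist $\gamma_{n}\in\mathbb{R}$ with $|\gamma_{n}|\to\infty$ and $\Phi_{n}=(y_{n},z_{n},u_{n},\xi_{n},\eta_{n})\in\mathcal{D}(\tilde{\mathcal{P}})$ with $\|\Phi_{n}\|_{\tilde{\mathcal{H}}_0}=1$ such that $F_{n}:=(i\gamma_{n}I-\tilde{\mathcal{P}})\Phi_{n}=(f_{n},g_{n},v_{n},p_{n},q_{n})\to0$ in $\tilde{\mathcal{H}}_0$. Taking the real part of $\langle F_{n},\Phi_{n}\rangle$ and reusing (\ref{0dis1}) extracts $\|z_{n}\|_{L^{2}}\to0$ and $|u_{n}(1)|\to0$. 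The third resolvent line $u_{n,x}/\tau+(i\gamma_{n}+(K+\alpha)/2)u_{n}=v_{n}$ is a linear first-order ODE which I would integrate explicitly; evaluating the resulting representation at $x=1$ and using $v_{n}\to0$ forces $|\xi_{n}|=|u_{n}(0)|\to0$, and substituting back yields $\|u_{n}\|_{L^{2}}\to0$. The first resolvent line $(i\gamma_{n}+(K+\alpha)/2)y_{n}=z_{n}+f_{n}$ combined with $|\gamma_{n}|\to\infty$ produces $\|y_{n}\|_{L^{2}}\to0$. It then remains to show $\int_{0}^{1}a|y_{n,x}|^{2}\,dx\to0$ and $|\eta_{n}|\to0$, for which I would multiply the second resolvent equation by $\overline{y_{n}}$ and, if necessary, by a Morawetz-type multiplier of the form $x\overline{y_{n,x}}$, integrate by parts on $(0,1)$, and use the two dynamical boundary equations to trade the trace terms against the already-controlled quantities $\xi_{n}$, $u_{n}(1)$, $z_{n}$, $\|y_n\|_{L^2}$ and the remainders $p_{n},q_{n}$. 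Altogether $\|\Phi_{n}\|_{\tilde{\mathcal{H}}_0}\to0$, contradicting $\|\Phi_{n}\|=1$.

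The main obstacle is the last step: controlling the cable strain $\int_{0}^{1}a|y_{n,x}|^{2}\,dx$ and the payload velocity $\eta_{n}=z_{n}(1)$. The dissipation budget directly damps only $z_{n}$ in $L^{2}$ and $u_{n}(1)$, whereas $y_{n,x}$ does not appear there, and $\eta_{n}$ is coupled to $y_{n,x}(1)$ through the high-frequency boundary ODE $M(i\gamma_{n}+(K+\alpha)/2)\eta_{n}+(ay_{n,x})(1)=Mq_{n}$. A careful multiplier identity is required to convert the boundary trace $(ay_{n,x})(1)\overline{y_{n}}(1)$ into the $H^{1}$ seminorm of $y_{n}$ while absorbing the remaining traces via the small quantities $\xi_{n}, u_{n}(1), z_{n}$ and the source $F_{n}$. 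Once this balance is secured, Theorem \ref{lemraokv} closes the argument.
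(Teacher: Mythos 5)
Your overall architecture is the paper's: Huang--Pr\"uss (Theorem \ref{lemraokv}) plus the contradiction scheme, and your Step 1 (no imaginary eigenvalues) and the first half of Step 2 (extracting $z^n\to0$, $u^n(1)\to0$, then $u^n\to0$, $\xi^n\to0$, $\gamma_n y^n\to 0$ and $y^n\to0$ from the dissipation estimate (\ref{0dis1}) and the resolvent lines) match the paper essentially line for line; your route to $\xi^n\to0$ via the explicit transport representation is a harmless variant of the paper's (which instead reads it off (\ref{01.14kv}) after dividing by $\gamma_n$).

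The genuine gap is exactly the step you flag as ``the main obstacle'': you do not actually prove $\|\sqrt{a}\,y^n_x\|_{L^2}\to0$ and $\eta^n\to0$, and the strategy you sketch does not close as stated. Multiplying (\ref{01.13bkv}) by $\overline{y^n}$ and integrating by parts, then eliminating the traces via the boundary ODEs and $y^n(1)=(\eta^n+f^n(1))/(i\gamma_n+\tfrac{K+\alpha}{2})$, the boundary term at $x=1$ contributes
\[
\Re\left[\frac{M\bigl(i\gamma_n+\tfrac{K+\alpha}{2}\bigr)}{-i\gamma_n+\tfrac{K+\alpha}{2}}\right]|\eta^n|^2\longrightarrow -M|\eta^n|^2,
\]
so the identity degenerates to $\int_0^1 a|y^n_x|^2\,dx - M|\eta^n|^2\to0$: the two unknown quantities appear with \emph{opposite} signs and neither is controlled. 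You would indeed need the second (Morawetz-type) multiplier to break the tie, and that identity is itself delicate here because $a$ is only in $H^1(0,1)$, so $(x/a)'$ is merely $L^2$ and the commutator term $\int (x/a)'|ay^n_x|^2\,dx$ is not obviously finite. The paper bypasses all of this with a one-line interpolation (Gagliardo--Nirenberg) inequality: from (\ref{01.13bkv}) and (\ref{0z0}) one gets $(a y^n_x)_x/\gamma_n\to0$ in $L^2$ (this is (\ref{0yxx})), and then
\[
\|a y^n_x\|_2^2\leq c\,\frac{\|(a y^n_x)_x\|_2}{|\gamma_n|}\,\|\gamma_n y^n\|_2\longrightarrow 0,
\]
which gives the strain directly; once $\|a y^n_x\|_2\to0$ and $\|(ay^n_x)_x\|_2/\gamma_n\to 0$ are known, the trace bound $|(ay^n_x)(1)|\leq c\left(\|(ay^n_x)_x\|_2+\|ay^n_x\|_2\right)$ yields $(ay^n_x)(1)/\gamma_n\to0$, and dividing (\ref{0lasteq}) and (\ref{01.14kv}) by $\gamma_n$ gives $\eta^n\to0$ and $\xi^n\to0$ with no multiplier identity at all. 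Note also that this device consumes the full strength of $\gamma_n y^n\to 0$ (not merely $y^n\to0$), which is worth stating explicitly in your argument.
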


One direct consequence of the above theorem is: the solutions of the system (\ref{0b}) exponentially tend in ${\mathcal{H}}_0$ to $(\zeta,0,0,0,0)$ as $t\longrightarrow+\infty$, where $\zeta$ is given by (\ref{0cste}).

\begin{proof}
 In the first lieu, we are going to check that (\ref{01.8kv}) holds. Consider the eigenvalue problem
\begin{equation}
\tilde{\mathcal{P}}Z=i \gamma Z, \label{0eigenkv}%
\end{equation}
where $\gamma \in \mathbb{R}$ and $Z=(y,z,u,\xi,\eta)=0_{\tilde{\mathcal{H}}_0}$. The latter writes
\begin{align}
z -\dfrac{K+\alpha}{2} y & =i\gamma  y\label{0eigen1}\\
(a\,y_{x})_{x} -\left( \sigma+\dfrac{K+\alpha}{2} \right)  z & =i\gamma z\label{0eigen2}\\
-\frac{u_{x}}{\tau}-\dfrac{K+\alpha}{2} u  & =i\gamma u\label{0eigen3}\\
\frac{1}{m}\left[  (ay_{x})(0)+\alpha u(0)\right] -\dfrac{K+\alpha}{2} \xi  & =i\gamma
\xi\ .\label{0eigen4}\\
-\frac{(ay_{x})(1)}{M} -\dfrac{K+\alpha}{2} \eta & =i\gamma\eta,\label{0eigenbis}%
\end{align}
Taking the inner product of (\ref{0eigenkv}) with $Z$ and recalling (\ref{0dis1}), we get:
\begin{equation}
0=\Re\left(  <\tilde{\mathcal{P}}Z,Z>_{{\tilde{\mathcal{H}}_0}}\right)  \leq \displaystyle -\sigma \int_{0}^{1}  z^{2} \,dx-\displaystyle\frac{K-\alpha}{2}u^{2}(1) (\leq0).\label{01.7kv}%
\end{equation}
Thereby, $z=0$ in $L^2(0,1)$, and $u(1)=0$. Amalgamating $z=0$ and (\ref{0eigen1}) gives $y=0$ and hence  $z(1)=\eta$ by (\ref{0eigenbis}). Exploiting (\ref{0eigen3}) together with $u(1)=0$, we get $u=0$. Finally, $\xi=z(0)=0$ by  (\ref{0eigen4}). Whereupon, (\ref{01.8kv}) is fulfilled by the operator $\tilde{\mathcal{P}}$.

We turn now to the proof of the fact that the resolvent operator of $\tilde{\mathcal{P}}$ obeys the condition \eqref{01.9kv}. Suppose that this claim is not true. Then, thanks to Banach-Steinhaus Theorem \cite{br}, there exist a sequence of real numbers $\gamma_{n} \rightarrow+\infty$ and a sequence of vectors $Z^{n}=(y^{n},z^{n},u^{n},\xi^{n},\eta^{n})\in\mathcal{D}(\tilde{\mathcal{P}})$ with
\begin{equation}
\left\| Z^{n} \right\|_{\tilde{\mathcal{H}}_0}=\left\| y^n\right\|_{H^{1}(0,1)}+\left\| z^n \right\|_{L^{2}(0,1)}+\left\| u^n \right\|_{L^{2}(0,1)}+\left| \xi^{n} \right|_{\mathbb{C}}+\left| \eta^{n} \right|_{\mathbb{C}}=1
\label{0boun}%
\end{equation}
such that
\begin{equation}
\Vert(i\gamma_{n}I-\tilde{\mathcal{P}})Z^{n}\Vert_{\tilde
{\mathcal{H}}_0}\rightarrow0\;\;\;\;\mbox{as}\;\;\;n\rightarrow\infty
,\label{01.12kv}%
\end{equation}
that is, {as} $n\rightarrow\infty$, we have:
\begin{equation}
\left( i\gamma_{n} + \dfrac{K+\alpha}{2} \right) y^{n}-z^{n}   \equiv f_{n}\rightarrow0\;\;\;\mbox{in}\;\;H^{1}(0,1),\label{01.13kv}%
\end{equation}%
\begin{equation}
\left( \sigma+i\gamma_{n} + \dfrac{K+\alpha}{2} \right) z^{n}-(a y_x^{n})_{x}  \equiv g^{n} \rightarrow0 \;\;\;\mbox{in}\;\;L^{2}(0,1),\label{01.13bkv}%
\end{equation}%
\begin{equation}
\left( i\gamma_{n} + \dfrac{K+\alpha}{2} \right) u^{n}+\frac{u_x^{n}}{\tau}  \equiv v^{n}\rightarrow0 \;\;\;\mbox{in}\;\;L^{2}%
(0,1),\label{01.14bkv}%
\end{equation}%
\begin{equation}
\left( i\gamma_{n} + \dfrac{K+\alpha}{2} \right) \xi^{n}-\frac{1}{m}\left[  (a y_x^{n})(0)+\alpha u^{n}(1)\right] \equiv p^{n}\rightarrow0 \;\;\;\mbox{in}\;\; \mathbb{C},\label{01.14kv}%
\end{equation}%
\begin{equation}
\left( i\gamma_{n} + \dfrac{K+\alpha}{2} \right) \eta^{n}+\frac{(ay_x^{n})(1)}{M}  \equiv q^{n} \rightarrow0 \;\;\;\mbox{in}\;\; \mathbb{C}.\label{0lasteq}%
\end{equation}
Exploring the fact that
%\begin{equation}
\[ \left\| (i\gamma_{n}I-\tilde{\mathcal{P}})Z^{n} \right\|_{\tilde{\mathcal{H}}_0%
}\geq \left\vert \Re\langle (i\gamma_{n}I-\tilde{\mathcal{P}})Z^{n},Z^{n} \rangle_{\tilde{\mathcal{H}}_0} \right\vert, \]
%\label{1.15kv}%
%\end{equation}
together with \eqref{01.7kv}-\eqref{01.12kv}, we have
\begin{equation}
z^n \rightarrow0, \quad \gamma_{n} y^n \rightarrow0, \quad \text{and}\; y^n \rightarrow0, \quad \hbox{in}\;L^{2}(0,1)
\label{0z0}%
\end{equation}
and
\begin{equation}
u^{n}(1)\rightarrow0 \;\;\mbox{in}\;\; \mathbb{C}.\label{0unkv}%
\end{equation}
Combining (\ref{01.14bkv}) and (\ref{0unkv}) yields
\begin{equation}
u^{n}\longrightarrow 0\;\;\mbox{in}\;\;L^{2}(0,1).\label{0znkv}%
\end{equation}
It follows from  (\ref{01.13bkv}) that
\[\dfrac{(a y_x^{n})_{x}}{\gamma_{n}}= \left( \dfrac{\sigma+ \dfrac{K+\alpha}{2}}{\gamma_{n}}+i \right) z^n- \dfrac{g^{n}}{\gamma_{n}}.\]
This together with \eqref{0z0} give
\begin{equation}
\dfrac{(a y_x^{n})_{x}}{\gamma_{n}}  \longrightarrow0 \;\;\;\mbox{in}\;\;L^{2}(0,1).
\label{0yxx}%
\end{equation}%
Applying Gagliardo-Nirenberg interpolation inequality \cite{br}, we have
\[
\left\|a y_x^n \right\|_2^2 \leq  c \, \dfrac{\left\|(a y_x^n)_x \right\|_2}{\left| \gamma_{n} \right|}  \left\|\gamma_{n} y^n  \right\|_2,\]
where $\| \cdot \|_2$ is the usual norm in $L^{2}(0,1)$, and $c$ is a positive constant independent of $n$. Thereby,
\begin{equation}
\left\|a y_x^n \right\|_2   \longrightarrow 0 \;\;\;\mbox{in}\;\;L^{2}(0,1),\label{0der}%
\end{equation}
due to \eqref{0yxx} and \eqref{0z0}.

Furthermore, we have
\begin{equation}
\dfrac{(a y_x^{n})(1)}{\gamma_{n}}  \longrightarrow0 \;\;\;\mbox{in}\;\; \mathbb{C},
\label{0yx1}
\end{equation}
by means of \eqref{0boun} and \eqref{0yxx}. Amalgamating \eqref{0lasteq} and \eqref{0yx1}, we have
\begin{equation}
\eta^{n}=z^n (1) \rightarrow0, \;\;\;\mbox{in}\;\; \mathbb{C}.\label{0eta}%
\end{equation}
Similarly, it follows from \eqref{01.14kv} that
\begin{equation}
\xi^{n}=z^n (0) \rightarrow0, \;\;\;\mbox{in}\;\; \mathbb{C}.\label{0xi}%
\end{equation}
To recapitulate, it follows from \eqref{0z0}-\eqref{0znkv}, \eqref{0eta}, and \eqref{0xi} that $\left\Vert
Z^{n}\right\Vert _{\tilde{\mathcal{H}}_0}\longrightarrow 0$, which  contradicts the fact that $\left\Vert Z^{n}\right\Vert _{\tilde{\mathcal{H}}_0}=1,\;\forall\ n\in\mathbb{N}.$ Thus, the conditions (\ref{01.8kv}) and (\ref{01.9kv}) are fulfilled. This completes the proof of Theorem \ref{0lrkv}.
\end{proof}

%%%%%%%%%%%%%%%%%%%%%%%%%%%%%%%%%%%%%%%%%%%%%%%%%%%%%%%%%%%
%%%%%%%%%%%%%%%%%%%% END BETA ZERO %%%%%%%%%%%%%%%%%%%%%%%%%%%%%

Now, we go back to our original system (\ref{(1.1)})-(\ref{F}) and investigate in the next sections the existence and uniqueness of its solutions as well as their behavior.

\section{Well-posedness of the closed-loop system (\ref{(1.1)})-(\ref{F})} \label{sect2}
\setcounter{equation}{0}

Letting $u(x,t)=y_{t}(0,t-x\tau),$ the closed-loop system (\ref{(1.1)})-(\ref{F}) becomes
\begin{equation}
\left\{
\begin{array}
[c]{ll}%
y_{tt}(x,t)-(ay_{x})_{x}(x,t) + \sigma y_t(x,t) =0, & (x,t)\in(0,1)\times(0,\infty),\\
\tau u_{t}(x,t)+u_{x}(x,t)=0, & (x,t)\in(0,1)\times(0,\infty),\\[1mm]%
my_{tt}(0,t)-\left(  ay_{x}\right)  (0,t)=\alpha u(1,t)-\beta u(0,t), & t>0,\\
My_{tt}(1,t)+\left(  ay_{x}\right)  (1,t)=0, & t>0,\\
y(x,0)=y_{0}(x),\;y_{t}(x,0)=y_{1}(x), & x\in(0,1),\\
u(x,0)=y_{t}(0,-x\tau)=f(-x\tau), & x\in(0,1).
\end{array}
\right. \label{3}%
\end{equation}
Then,  consider the state variable $\Phi=(y,z,u,\xi,\eta),$ where $z(\cdot,t)=y_{t}(\cdot,t),\;\xi (t)=y_{t}(0,t)$ and $\eta (t)=y_{t}(1,t)$. Next, the state space of our system is
\[
{\mathcal{H}}=H^{1}(0,1)\times L^{2}(0,1)\times L^{2}(0,1)\times\mathbb{R}^{2}.
\]
The immediate task is to equip  ${\mathcal{H}}$ with an inner product that induces a norm equivalent to the usual one. To do so, let $K$ be a positive constant to be determined and define the energy associated to (\ref{3}) as follows:
\begin{equation}
{\mathcal{E}}(t)=E_{0}(t)+E_{1}(t),\label{ene}%
\end{equation}
where
\begin{equation}
E_{0}(t)=\frac{1}{2}\left[ \int_{0}^{1}\left[ y_{t}^{2}(x,t)+a(x)y_{x}^{2}(x,t)+K \tau y_{t}^{2}(0,t-x\tau) \right] \, dx+my_{t}^{2}(0,t)+My_{t}^{2}(1,t) \right]  ,\label{e0}%
\end{equation}
and
\begin{equation}
E_{1}(t)=\frac{1}{2} \left[  \int_{0}^{1}\left[ \sigma y(x,t)+y_{t}(x,t)+\alpha \tau y_{t}(0,t-x\tau)\right] \, dx+my_{t}(0,t)+My_{t}(1,t)+(\beta-\alpha) y(0,t) \right]^2.\label{e1}%
\end{equation}
Differentiating $E_1$ in a formal way, using (\ref{3}) and integrating by parts, we obtain after a straightforward computation
\begin{equation}
{\mathcal{E}}^{\prime}(t)  \leq -\sigma \int_{0}^{1}  y_{t}^{2}(x,t) \,dx + \left(  \displaystyle\frac{K}{2}%
+\dfrac{\alpha}{2}-\beta\right)  y_{t}^{2}(0,t)+\dfrac{1}{2}\left(\alpha -K\right)  y_{t}^{2}(0,t-\tau).\label{dec}%
\end{equation}
Clearly, the energy ${\mathcal{E}}(t)$ is decreasing if we assume that
\begin{equation}
\alpha<\beta,\label{sma}%
\end{equation}
and then choose $K$ such that
\begin{equation}
\alpha \leq K \leq 2\beta-\alpha.\label{uni}%
\end{equation}
%%%%%%%%%%%%%%%%%%%%%%%%%%%%%%%%%%%%%%%%%%%%%%%%%%%%%%%%%%%%%%%%%%

Whereupon, the space ${\mathcal{H}}$ shall be equipped with the following real inner product
\begin{equation}
\begin{array}
[c]{l}%
\langle(y,z,u,\xi,\eta),(\tilde{y},\tilde{z},\tilde{u},\tilde{\xi},\tilde
{\eta})\rangle_{\mathcal{H}}=\displaystyle\int_{0}^{1}\left(  ay_{x}\tilde
{y}_{x}+z\tilde{z}\right) \, dx+K \tau\int_{0}^{1} u \tilde{u} \,dx+m \xi \tilde{\xi
}+M\eta\tilde{\eta}\\
+\displaystyle \varpi \left[  \int_{0}^{1}(\sigma y+z) dx+m \xi+M \eta+(\beta-\alpha) y(0)+\tau
\alpha\int_{0}^{1} u \, dx \right] \\
\hspace{5.7cm} \displaystyle \times \left[  \int_{0}^{1} (\sigma \tilde{y}+\tilde{z}) \, dx+m \tilde{\xi
}+M \tilde{\eta}+(\beta-\alpha){\tilde{y}}(0)+\tau \alpha \int_{0}^{1}\tilde{u} \, dx \right],
\label{ip}%
\end{array}
\end{equation}
where $K>0$ satisfies the condition (\ref{uni}) and $\varpi$ is a positive constant sufficiently small so that the norm induced by  (\ref{ip}) is equivalent to the usual norm of ${\mathcal{H}}$. In fact, arguing as in \cite{ac} one can prove the following lemma
\begin{lem}
The state space ${\mathcal{H}}$ endowed with the inner product (\ref{ip}) is a Hilbert space provided that $\varpi$ is small enough and the hypotheses (\ref{1.3}), (\ref{sma}) and (\ref{uni}) are fulfilled. \label{p1}
\end{lem}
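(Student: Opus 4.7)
\noindent The plan is to verify that the form (\ref{ip}) is a positive definite symmetric bilinear form on $\mathcal{H}$ whose associated norm is equivalent to the standard Hilbert norm
\[ \|(y,z,u,\xi,\eta)\|_{\star}^{2} := \|y\|_{H^{1}(0,1)}^{2} + \|z\|_{L^{2}(0,1)}^{2} + \|u\|_{L^{2}(0,1)}^{2} + |\xi|^{2} + |\eta|^{2}. \]
Once this equivalence is established, completeness of $\mathcal{H}$ under $\|\cdot\|_{\mathcal{H}}$ follows automatically, since equivalent norms share the same Cauchy sequences.

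Introduce the shorthands
\[ A(\Phi) := \int_{0}^{1}(a y_{x}^{2}+z^{2})\,dx + K\tau\int_{0}^{1}u^{2}\,dx + m\xi^{2} + M\eta^{2}, \]
\[ L(\Phi) := \int_{0}^{1}(\sigma y + z)\,dx + m\xi + M\eta + (\beta-\alpha)\,y(0) + \tau\alpha\int_{0}^{1}u\,dx, \]
so that $\langle \Phi,\Phi\rangle_{\mathcal{H}} = A(\Phi) + \varpi\,[L(\Phi)]^{2}$. Bilinearity and symmetry are immediate; non-negativity of $A$ follows from (\ref{1.3}) and (\ref{uni}). For strict positivity, $\|\Phi\|_{\mathcal{H}} = 0$ forces $A(\Phi) = 0$, which gives $y_{x} \equiv 0$, $z \equiv 0$, $u \equiv 0$ and $\xi = \eta = 0$; hence $y$ is a constant $c$ and $L(\Phi) = (\sigma + \beta - \alpha)\,c$. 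Thanks to (\ref{sma}) and $\sigma > 0$ the coefficient $\sigma + \beta - \alpha$ is strictly positive, so $L(\Phi) = 0$ yields $c = 0$, i.e. $\Phi = 0$.

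The core of the proof is the norm equivalence. The upper bound $\|\Phi\|_{\mathcal{H}}^{2} \leq C\|\Phi\|_{\star}^{2}$ will be obtained directly using the embedding $H^{1}(0,1) \hookrightarrow C[0,1]$ (to handle the trace $y(0)$), the boundedness of $a$ supplied by (\ref{1.3}), and Cauchy--Schwarz on each integral appearing in $L$. The lower bound $c_{0}\|\Phi\|_{\star}^{2} \leq \|\Phi\|_{\mathcal{H}}^{2}$ I would prove by contradiction: assume a sequence $\Phi^{n} = (y^{n},z^{n},u^{n},\xi^{n},\eta^{n})$ with $\|\Phi^{n}\|_{\star} = 1$ and $\|\Phi^{n}\|_{\mathcal{H}} \to 0$. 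The vanishing of $A(\Phi^{n})$ yields $\|y_{x}^{n}\|_{L^{2}} \to 0$, $z^{n} \to 0$ in $L^{2}$, $u^{n} \to 0$ in $L^{2}$, $\xi^{n} \to 0$ and $\eta^{n} \to 0$, while the $\varpi$-term forces $L(\Phi^{n}) \to 0$. Since $\{y^{n}\}$ is bounded in $H^{1}(0,1)$, Rellich's compactness theorem extracts a subsequence converging strongly in $L^{2}$; combined with $y_{x}^{n} \to 0$ this upgrades to a strong $H^{1}$-limit equal to some constant $y^{\star} \equiv c$, and the continuous trace then yields $y^{n}(0) \to c$. Passing to the limit in $L(\Phi^{n})$ gives $(\sigma + \beta - \alpha)\,c = 0$, so $c = 0$ by (\ref{sma}). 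Consequently $y^{n} \to 0$ in $H^{1}$ along the subsequence, and $\|\Phi^{n}\|_{\star} \to 0$, contradicting $\|\Phi^{n}\|_{\star} = 1$.

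The main obstacle is that the ``algebraic'' part $A$ of the inner product controls only $\|y_{x}\|_{L^{2}}$ and not $\|y\|_{L^{2}}$, because $y$ is unconstrained at the endpoints; the role of the $\varpi$-perturbation, and specifically of the trace term $(\beta - \alpha)\,y(0)$ inside $L$, is precisely to supply the missing control of the constant mode. This explains why (\ref{sma}) is essential: if $\alpha = \beta$ the coefficient of $y(0)$ in $L$ would vanish and the contradiction argument above would collapse at its final step. The smallness of $\varpi$ is actually not required for the equivalence itself (every positive $\varpi$ is compatible with the argument above); it is imposed because the subsequent dissipativity estimates for the generator $\mathcal{A}$ need the $\varpi$-perturbation to be dominated by the principal quadratic part.
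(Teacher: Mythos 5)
Your proof is correct. Note that the paper itself does not prove Lemma \ref{p1} at all: it simply states that one can argue ``as in \cite{ac}'' and omits the details, so there is no in-paper argument to compare against. Your argument supplies exactly the missing content, and the structure is the natural one: the upper bound by Cauchy--Schwarz and the trace embedding, strict positivity and the lower bound both hinging on the fact that the only degree of freedom not controlled by the quadratic part $A$ is the constant mode of $y$, which the linear functional $L$ sees with coefficient $\sigma+\beta-\alpha>0$ thanks to \eqref{sma} and $\sigma>0$. The compactness/contradiction argument for the lower bound is sound (an alternative, more quantitative route is Poincar\'e--Wirtinger plus solving $L(\Phi)$ for the mean of $y$, which yields an explicit constant of order $1+\varpi^{-1}$). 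Your closing observation that the equivalence holds for every $\varpi>0$ is also correct; only your speculation about why the authors insist on small $\varpi$ is slightly off the mark, since in the dissipativity computation the $\varpi$-term cancels exactly because $L$ is conserved along trajectories ($L(\mathcal{A}\Phi)=0$ for $\Phi\in\mathcal{D}(\mathcal{A})$), but this aside does not affect the validity of the proof.
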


Now, one can write the closed-loop system (\ref{3}) as follows
\begin{equation}
\left\{
\begin{array}
[c]{l}%
{\Phi}_t (t)=\mathcal{A}\Phi(t),\\
\Phi(0)=\Phi_{0},
\end{array}
\right. \label{si}%
\end{equation}
in which  $\mathcal{A}$ is a linear operator defined by
\begin{equation}%
\begin{array}
[c]{l}%
{\mathcal{D}}(\mathcal{A})=\left\{  (y,z,u,\xi,\eta)\in{\mathcal{H}};y\in
H^{2}(0,1),\;\;z,u\in H^{1}(0,1),\;\;\xi=u(0)=z(0),\;\eta=z(1)\right\}  ,\\
\displaystyle{\mathcal{A}}(y,z,u,\xi,\eta)=\left(  z,(ay_{x})_{x}-\sigma z,-\frac{u_{x}}{\tau},\frac{1}{m}\left[  (ay_{x})(0)-\beta\xi+\alpha u({1})\right],-\dfrac{(ay_{x})(1)}{M}\right);\label{1.62}
\end{array}
\end{equation}
whereas $\Phi=(y,z,u,\xi,\eta)$ and $\Phi_{0}=(y_{0},y_{1},f(-\tau\cdot),\xi_{0},\eta_{0})$.

The well-posedness result is stated below:
\begin{theo}
Assume that (\ref{1.3}), (\ref{sma}) and (\ref{uni}) hold. Then, the operator $\mathcal{A}$ defined by (\ref{1.62}) is densely defined in ${\mathcal{H}}$ and generates on ${\mathcal{H}}$ a $C_{0}$-semigroup of contractions $e^{t\mathcal{A}}$. Consequently, if $\Phi_{0}\in{\mathcal{D}}(\mathcal{A})$, then the solution $\Phi$ is strong and belongs to $C([0,\infty);{\mathcal{D}}(\mathcal{A})\cap C^{1}([0,\infty);\mathcal{H})$. However, for any initial condition $\Phi_{0}\in{\mathcal{H}}$, the system (\ref{si}) has a unique mild solution $\Phi\in C([0,\infty);\mathcal{H})$. Finally, the spectrum of $\mathcal{A}$ consists of isolated eigenvalues of finite algebraic multiplicity only. \label{(t1)}
\end{theo}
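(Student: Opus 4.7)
The plan is to apply the L\"umer--Phillips theorem directly. In contrast with the $\beta=0$ case of Theorem \ref{(0t1)}, the boundary velocity term now supplies enough dissipation at $x=0$ that no preliminary shift of $\mathcal{A}$ is needed, and I expect the semigroup to be one of contractions. Density of $\mathcal{D}(\mathcal{A})$ in $\mathcal{H}$ is routine and I would dispatch it by noting that sufficiently smooth quintuples satisfying the compatibility conditions $\xi = u(0) = z(0)$ and $\eta = z(1)$ are dense.

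For dissipativity, I would compute $\langle \mathcal{A}\Phi, \Phi\rangle_{\mathcal{H}}$ for $\Phi \in \mathcal{D}(\mathcal{A})$ by integration by parts, following verbatim the computation performed in the proof of Theorem \ref{(0t1)} but keeping the additional $-\beta\xi^2$ contribution produced by the fourth component of $\mathcal{A}$. The decisive point is that the linear functional
\begin{equation*}
I(\Phi) := \int_0^1 (\sigma y + z + \tau\alpha u)\, dx + m\xi + M\eta + (\beta - \alpha)y(0)
\end{equation*}
satisfies $I(\mathcal{A}\Phi) = 0$ (this is precisely the reason why the cross-term in (\ref{ip}) has been designed with the factor $(\beta-\alpha)y(0)$), so the $\varpi$-piece of the inner product contributes nothing to $\langle \mathcal{A}\Phi, \Phi\rangle_{\mathcal{H}}$. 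Using $u(0) = \xi$ and applying Young's inequality to the resulting cross-term $\alpha\xi u(1)$, I expect to recover exactly the estimate (\ref{dec}) already derived formally for $\mathcal{E}(t)$:
\begin{equation*}
\langle \mathcal{A}\Phi, \Phi\rangle_{\mathcal{H}} \leq -\sigma\int_0^1 z^2\, dx + \Bigl(\tfrac{K+\alpha}{2} - \beta\Bigr)\xi^2 + \tfrac{\alpha - K}{2}u^2(1).
\end{equation*}
The conditions (\ref{sma}) and (\ref{uni}) on $K$ were tailored to make both coefficients non-positive, whence $\mathcal{A}$ is dissipative.

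For surjectivity of $\lambda I - \mathcal{A}$ for any $\lambda > 0$, I would mimic the reduction carried out in (\ref{0ra1}): given data $(f,g,v,p,q) \in \mathcal{H}$, eliminate $z$, $u$, $\xi$, $\eta$ in terms of $y$ and the data (using the transport equation to express $u$ with boundary datum $u(0) = \lambda y(0) - f(0)$), and reduce the problem to a single scalar elliptic equation on $(0,1)$ for $y \in H^2(0,1)$ with mass--spring type boundary conditions. The weak formulation differs from the one used for $\mathcal{A}_0$ only by the addition of $\beta\lambda y(0)\phi(0)$, which is non-negative and thus only reinforces coercivity on $H^1(0,1)$. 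Lax--Milgram then yields a unique $y$, hence $\operatorname{Range}(\lambda I - \mathcal{A}) = \mathcal{H}$.

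L\"umer--Phillips now gives the $C_0$-semigroup of contractions, and the strong/mild solution statements are standard consequences \cite{Pa:83}. For the spectral claim, $\mathcal{D}(\mathcal{A})$ embeds compactly in $\mathcal{H}$ by Rellich--Kondrachov, so the resolvent is compact and $\sigma(\mathcal{A})$ consists of isolated eigenvalues of finite algebraic multiplicity only \cite{ka}. The only step requiring genuine care is verifying the identity $I(\mathcal{A}\Phi) = 0$ and correctly tracking the $\beta$-dependent boundary contributions through the integration by parts; once these are in place the argument is essentially a transcription of the proof of Theorem \ref{(0t1)} with the dissipative improvement brought by $\beta > 0$.
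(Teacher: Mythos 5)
Your proposal is correct and follows essentially the same route as the paper: Lumer--Phillips via the integration-by-parts computation of $\langle\mathcal{A}\Phi,\Phi\rangle_{\mathcal{H}}$ (with the $\varpi$-cross-term vanishing because $I(\mathcal{A}\Phi)=0$, exactly as you note), Young's inequality yielding the estimate (\ref{dis1}) under (\ref{sma})--(\ref{uni}), surjectivity of $\lambda I-\mathcal{A}$ by reduction to the elliptic problem (\ref{ra1}) and Lax--Milgram, and the compact-resolvent argument for the spectral claim. No substantive differences from the paper's proof.
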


\begin{proof}
Given $\Phi=(y,z,u,\xi,\eta)\in{\mathcal{D}}({\mathcal{A}}),$ and recalling (\ref{ip}) as well as (\ref{1.62}), we obtain after several integrations by parts
\begin{eqnarray*}
\langle{\mathcal{A}}\Phi,\Phi)\rangle_{{\mathcal{H}}} &=& \displaystyle -\sigma \int_{0}^{1}  z^{2} \,dx+(ay_{x}
)(1)z(1)-(ay_{x})(0)z(0)-\dfrac{K}{2}(u^{2}(1)-u^{2}(0))+\xi(ay_{x} )(0)-\beta\xi^{2}\\
&+& \varpi\left(  \int_{0}^{1}z\,dx+\tau\alpha\int_{0}^{1}u\,dx+m\xi+M\eta\right)  \left(  \alpha u(0)+\beta\xi+(\beta-\alpha)z(0)\right)\\
&+& \alpha\xi u(1)- \eta(ay_{x})(1)\\
&=&\displaystyle -\sigma \int_{0}^{1}  z^{2} \,dx+\alpha\xi u(1)-\frac{K}{2}u^{2}(1)+\dfrac{K}{2}u^{2}(0)-\beta
\xi^{2}.
\end{eqnarray*}
Invoking Young's inequality, we deduce that
\begin{equation}
\langle{\mathcal{A}}\Phi,\Phi)\rangle_{{\mathcal{H}}} \leq \displaystyle -\sigma \int_{0}^{1}  z^{2} \,dx+\left[ -\beta+\dfrac{K+\alpha}{2}\right]  \xi^{2}+\displaystyle\frac{\alpha-K}{2}u^{2}(1)\label{dis1}.
\end{equation}
Thenceforth, the operator ${\mathcal{A}}$ is dissipative due to the assumptions (\ref{sma})-(\ref{uni}).

On the other hand, one can show that the operator $\lambda I-\mathcal{A}$ is onto $\mathcal{H}$ for $\lambda>0$. Indeed, given $(f,g,v,p,q)\in{\mathcal{H}}$, we seek $(y,z,u,\xi,\eta)\in{\mathcal{D}}(\mathcal{A})$ such that $(\lambda I-\mathcal{A})(y,z,u,\xi,\eta)=(f,g,v,p,q)$. A classical argument leads us to solve the following elliptic problem
\begin{equation}
\left\{
\begin{array}
[c]{l}%
\lambda (\lambda+\sigma) y-(ay_{x})_{x}=(\lambda+\sigma) f+g,\\
\lambda\left[  (m\lambda+\beta)-\alpha e^{-{\tau}\lambda}\right]
y(0)-(ay_{x})(0)=mp+(m\lambda+\beta-\alpha e^{-{\tau}\lambda})f(0)\\
\hspace{6.9cm}+\displaystyle{\tau}\alpha\int_{0}^{1}e^{-{\tau}\lambda
(1-s)}v(s)\,ds,\\
\lambda^{2}My(1)+(ay_{x})(1)=Mq+\lambda Mf(1),
\end{array}
\right. \label{ra1}%
\end{equation}
whose weak formulation is
\[
\displaystyle\int_{0}^{1}\left[  (\lambda+\sigma) \phi y+ay_{x}\phi_{x}\right] \, dx+\lambda\left[  (m\lambda+\beta)-\alpha e^{-{\tau}\lambda}\right]
y(0)\phi(0)+\lambda^{2}My(1)\phi(1)
\]%
\[
=\int_{0}^{1}\left[ (\lambda+\sigma) f+g \right]  \phi\,dx+\left[  mp+(m\lambda
+\beta-\alpha e^{-{\tau}\lambda})f(0)+\displaystyle{\tau}\alpha\int_{0}%
^{1}e^{-{\tau}\lambda(1-s)}v(s)\,ds\right]  \phi(0)
\]%
\begin{equation}
+\left[  Mq+\lambda Mf(1)\right]  \phi(1).\label{(1I)}%
\end{equation}
Applying Lax-Milgram Theorem \cite{br}, there exists a unique solution $y\in H^{2}(0,1)$ of (\ref{ra1}) for $\lambda>0$ and hence $\lambda I-\mathcal{A}$ is onto ${\mathcal{H}}$. Therefore,  $\left(\lambda I-{\mathcal{A}}\right)  ^{-1}$ exists and maps ${\mathcal{H}}$ into ${\mathcal{D}}({\mathcal{A}})$. Thereafter,  Sobolev embedding \cite{ad} implies  that $\left(  \lambda I-{\mathcal{A}}\right)^{-1}$ is compact and hence the spectrum of $\mathcal{A}$ consists of isolated eigenvalues of finite algebraic multiplicity only \cite{ka}. In turn, semigroups theory \cite{Pa:83} permits to vindicate the other assertions of our proposition.
\end{proof}

%%%%%%%%%%%%%%%%%%%%%%%%%%%%%%%%%%%%%%%%%%%%%%%%%%%%%%%%%%%%%
%%%%%%%%%%%%%%%%%%%%%%%%%%%%%%%%%%%%%%%%%%%%%%%%%%%%%%%%%%%%%

\section{Asymptotic behavior of the closed-loop system (\ref{(1.1)})-(\ref{F})}
\label{sect3}
By virtue of (\ref{1.62}), $\lambda=0$ is an eigenvalue of $\mathcal{A}$ whose eigenfunction is $\nu(1,0,0,0,0)$, where $\nu \in\mathbb{R}\setminus\{0\}$. Hence the energy ${\mathcal{E}}(t)$ (see (\ref{ene})) of the system (\ref{3}) corresponding to $\Phi=\nu(1,0,0,0,0)$ is constant and so will not tend to zero as  $t\longrightarrow+\infty$. In turn, we have the following convergence result
\begin{theo}
Assume that (\ref{1.3}), (\ref{sma}) holds and $K$ satisfies $|\alpha |<K<2\beta-\alpha$. Then, for any initial data $\Phi_{0}=(y_{0},y_{1},f,\xi_{0},\eta_{0})\in{\mathcal{H}}$, the solution $\Phi(t)=\biggl(y,y_{t},y_{t}(0,t-x\tau),y_{t}(0,t),y_{t}(1,t)\biggr)$ of the closed-loop system (\ref{si}) tends in ${\mathcal{H}}$ to $(\zeta,0,0,0,0)$ as $t\longrightarrow+\infty$, where
\begin{equation}
\zeta=\displaystyle\frac{1}{\beta-\alpha}\displaystyle\left[  \int_{0}^{1} \left( \sigma y_0 x)+ y_{1} x)+\alpha \tau f(-\tau x)\right)\,dx+(\beta-\alpha)y_{0}(0)+m\xi
_{0}+M\eta_{0}\right]  .\label{cste}%
\end{equation}
\label{t2}
\end{theo}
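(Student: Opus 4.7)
The plan is to mirror the strategy used for the shifted system in Theorem \ref{0t2}: combine precompactness of orbits with LaSalle's invariance principle, identify the structure of the $\omega$-limit set via the dissipation inequality \eqref{dis1}, and finally pin down the constant $\zeta$ through a scalar conservation law. As a first step, I would record that Theorem \ref{(t1)} already supplies compactness of the resolvent $(\lambda I - \mathcal{A})^{-1}$ for $\lambda > 0$ via Sobolev embeddings. Together with the contractivity of $e^{t \mathcal{A}}$, which follows from \eqref{dis1} under the strict hypotheses $|\alpha| < K < 2\beta - \alpha$, this guarantees that any trajectory $\{\Phi(t)\}_{t \geq 0}$ issuing from $\Phi_0 \in \mathcal{D}(\mathcal{A})$ is precompact in $\mathcal{H}$; a standard density argument will then extend the conclusion to arbitrary $\Phi_0 \in \mathcal{H}$.

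Next I would invoke LaSalle's invariance principle to conclude that the $\omega$-limit set $\omega(\Phi_0)$ is non-empty, compact and invariant under $e^{t \mathcal{A}}$, and that $\Phi(t) \to \omega(\Phi_0)$ as $t \to \infty$. Picking $\tilde{\Phi}_0 \in \omega(\Phi_0)$ and letting $\tilde{\Phi}(t) = (\tilde{y}, \tilde{z}, \tilde{u}, \tilde{\xi}, \tilde{\eta})(t) = e^{t \mathcal{A}} \tilde{\Phi}_0$, the norm $\|\tilde{\Phi}(t)\|_{\mathcal{H}}$ is constant in $t$, so that $\Re \langle \mathcal{A} \tilde{\Phi}, \tilde{\Phi} \rangle_{\mathcal{H}} \equiv 0$. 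Feeding this identity into \eqref{dis1} and exploiting the two strict inequalities $K > \alpha$ and $K < 2\beta - \alpha$ forces $\tilde{z} \equiv 0$, $\tilde{\xi} \equiv 0$ and $\tilde{u}(1, \cdot) \equiv 0$. Inserting $\tilde{z} = 0$ back into the evolution $\tilde{\Phi}_t = \mathcal{A} \tilde{\Phi}$ I expect to obtain, in order: $\tilde{y}$ independent of $t$ from the first equation; $(a \tilde{y}_x)_x \equiv 0$ from the second one, together with $(a \tilde{y}_x)(1) = 0$ coming from $\tilde{\eta} = \tilde{z}(1) = 0$ and the boundary condition at $x = 1$, so that $a \tilde{y}_x \equiv 0$ and $\tilde{y}$ is even spatially constant; and finally $\tilde{u} \equiv 0$ from the transport equation with boundary data $\tilde{u}(0, \cdot) = \tilde{\xi} = 0$. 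Thus $\omega(\Phi_0) \subset \{(\zeta, 0, 0, 0, 0) : \zeta \in \mathbb{R}\}$.

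To identify $\zeta$ explicitly, I would construct the scalar conservation law already suggested by the form of $E_1$ in \eqref{e1}: integrating the PDE over $x \in (0, 1)$, substituting the two boundary conditions at $x = 0, 1$, and using the identity $\alpha \tau \frac{d}{dt} \int_0^1 u(x, t)\, dx = \alpha y_t(0, t) - \alpha y_t(0, t - \tau)$ derived from the transport equation for $u$, a direct rearrangement yields
\[
\frac{d}{dt}\left[\int_0^1 \bigl( \sigma y + y_t + \alpha \tau y_t(0, t - x\tau) \bigr) dx + m y_t(0, t) + M y_t(1, t) + (\beta - \alpha) y(0, t)\right] = 0.
\]
Choosing a sequence $t_n \to \infty$ with $\Phi(t_n) \to (\zeta, 0, 0, 0, 0)$ in $\mathcal{H}$ and passing to the limit in this conservation identity at $t = t_n$ will then produce the announced formula \eqref{cste}. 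The main obstacle I anticipate lies in the second step: verifying rigorously that the pointwise-in-time constraints $\tilde{z} = \tilde{\xi} = 0$ and $\tilde{u}(1) = 0$, initially extracted from a single scalar dissipation inequality, propagate through the semigroup invariance to kill $\tilde{u}$ on all of $(0, 1)$ and to pin $\tilde{y}$ down as a spatial constant. This is precisely the point at which the cooperation between the interior damping $\sigma y_t$ and the boundary damping $\beta y_t(0, t)$ becomes crucial, the latter being the ingredient whose omission was shown in Theorem \ref{non} to destroy convergence altogether.
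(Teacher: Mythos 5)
Your proposal follows essentially the same route as the paper's proof of Theorem \ref{t2}: precompactness of the orbit via the compact resolvent, LaSalle's invariance principle combined with the dissipation inequality \eqref{dis1} to force $\tilde z=\tilde\xi=0$ and $\tilde u(1)=0$ and hence reduce $\omega(\Phi_0)$ to spatial constants, and the conservation law \eqref{ga} to identify $\zeta$; you merely make explicit the propagation step (that $(a\tilde y_x)_x=0$ with $(a\tilde y_x)(1)=0$ pins $\tilde y$ down and that the transport equation with $\tilde u(0,\cdot)=0$ kills $\tilde u$) which the paper compresses into one sentence. One small remark, which applies equally to the paper's own argument: passing to the limit $t=t_n$ in the conserved quantity actually gives $(\sigma+\beta-\alpha)\zeta$ on the left-hand side, since $\int_0^1\sigma y(x,t_n)\,dx\to\sigma\zeta$ as well, so the prefactor in \eqref{cste} should read $1/(\sigma+\beta-\alpha)$ rather than $1/(\beta-\alpha)$.
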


\begin{proof}
Let $\Phi(t)=\left(  y(t),y_{t} (t),u(t),\xi(t),\eta(t)\right)  =e^{t\mathcal{A}}\Phi_{0}$ be the solution of
(\ref{3}) stemmed from $\Phi_{0}=\left(  y_{0},y_{1},f,\xi_{0},\eta_{0}\right) \in {\mathcal{D}}({\mathcal{A}})$. It follows from Theorem \ref{(t1)} that the trajectories set of solutions $\{\Phi(t)\}_{\scriptscriptstyle t\geq0}$ is bounded for the graph norm and thus precompact by virtue of the compactness of the operator $\left(\lambda I-\mathcal{A}\right)  ^{-1}$, for $\lambda>0$. Thanks to LaSalle's principle \cite{HA}, it follows that the $\omega$-limit set
$$
\omega \left( \Phi_0 \right)=
\left \lbrace \Psi  \in \mathcal{H}; \; \mbox{there exists a sequence} \; t_n \to \infty \;\;\mbox{as}\;\; n \to \infty \; \mbox{such that} \; \Psi=\lim_{{\scriptstyle n \to \infty}} e^{t_n \mathcal{A}} \Phi_0  \right \rbrace $$
is non empty, compact, invariant under the semigroup $e^{t\mathcal{A}}$. Moreover, the solution $e^{t\mathcal{A}}\Phi _{0}\longrightarrow\omega\left(  \Phi_{0}\right)  \;$ as $t\rightarrow \infty\,$ \cite{HA}. Next, let $\tilde{\Phi}_{0}=\left(  \tilde{y}_{0},\tilde {y}_{1},\tilde{f},\tilde{\xi},\tilde{\eta}\right)  \in\omega\left(  \Phi
_{0}\right)  \subset{D}(\mathcal{A})$ and consider $\tilde{\Phi}(t)=\left(
\tilde{y}(t),\tilde{y}_{t}(t),\tilde{u}(t),\tilde{\xi}(t),\tilde{\eta }(t)\right)  =e^{t\mathcal{A}}\tilde{\Phi}_{0}\in{D}(\mathcal{A})$ as the unique strong solution of (\ref{si}). Exploiting the fact that $\Vert\tilde{\Phi}(t)\Vert_{\mathcal{H}}$ is constant \cite[Theorem 2.1.3 p. 18]{HA}, we have
\begin{equation}
<\mathcal{A}\tilde{\Phi},\tilde{\Phi}>_{\mathcal{H}}=0.\label{e1n}%
\end{equation}
Combining (\ref{dis1}) with (\ref{e1n}), we obtain $\tilde{z}=\tilde{y}_t$, $\tilde{\xi}=\tilde{y}_{t}(0,t)=0$ and $\tilde{u}(1)=\tilde{y}_{t}(0,t-\tau)=0$. Consequently, $\tilde{y}$ is constant and hence the $\omega$-limit set $\omega\left(  \Phi_{0}\right)  $ reduces to $(\zeta,0,0,0,0)$.

Now, the proof will be completed once we find $\zeta$. This can be done by arguing as in the proof of Theorem \ref{0t2}. Indeed, let $(\zeta,0,0,0,0) \in \omega \left(  \Phi_{0}\right)$, which yields
\begin{equation}
\Phi(t_{n})=(y(t_{n}),y_{t}(t_{n}),u(t_{n}),\xi(t_{n}),\eta(t_{n}))=e^{t_{n} \mathcal{A}}\Phi_{0}\longrightarrow(\zeta
,0,0,0,0), \, \text{for some} \, t_{n} \rightarrow\infty, \text{as} \, n \rightarrow\infty.
\label{boum2}%
\end{equation}
In turn, any solution of the closed-loop system (\ref{si}) stemmed from $\Phi_{0}=(y_{0},y_{1},f,\xi_{0},\eta_{0})$ verifies
\begin{equation}
\frac{\text{d}}{\text{dt}}\left[ \int_{0}^{1} \left( \sigma y(x,t)+ y_{t}(x,t)+\alpha \tau y_{t}(0,t-x\tau) \right) \,dx+my_{t}(0,t)+My_{t}(1,t)+(\beta-\alpha)y(0,t) \right]=0,
\label{ga}%
\end{equation}
for each $t\geq0$, and thus
\begin{align}
& \int_{0}^{1} \left( \sigma y(x,t)+ y_{t}(x,t)+\alpha \tau y_{t}(0,t-x\tau) \right)\,dx + my_{t}(0,t)+My_{t}(1,t)+(\beta-\alpha)y(0,t)\nonumber\\
& =\int_{0}^{1} \left( \sigma y(x,0)+ y_{t}(x,0)+\alpha \tau y_{t}(0,-x\tau) \right) \,dx+my_{t}(0,0)+My_{t}(1,0)+(\beta-\alpha)y(0,0)\nonumber\\
& =\int_{0}^{1} \left( \sigma y_0 (x)+ y_{1}(x)+\alpha \tau f(-x\tau) \right) \,dx+m\xi_{0}+M\eta_{0}+(\beta-\alpha)y_{0}(0).\label{bou2}%
\end{align}
Lastly, letting $t=t_{n}$ in (\ref{bou2}) with $n\rightarrow\infty$ and then exploring (\ref{boum2}), one can get the expression of  $\zeta$.
\end{proof}

%%%%%%%%%%%%%%%%%%%%%%%%%%%%%%%%%%%%%%%%%%%%%%%%%%%%%%%%%%%%%
%%%%%%%%%%%%%%%%%%%%%%%%%%%%%%%%%%%%%%%%%%%%%%%%%%%%%%%%%%%%%

\section{Exponential convergence rate for the closed-loop system (\ref{(1.1)})-(\ref{F})}
\label{sect4}
The main result of this section is to show that the rate of convergence of the solutions of the system (\ref{3}) is exponential. The proof depends on an essential way on the application of the frequency domain theorem (see Theorem \ref{lemraokv}).

Firstly, let us denote by $\hat{\mathcal{H}}$ the closed subspace of $\mathcal{H}$ and of co-dimension $1$ defined as follows
\[
\hat{\mathcal{H}} = \left\{ (y,z,u,\xi,\eta) \in\mathcal{H}; \, \int_{0}^{1} \left( \sigma y(x)+z(x) +\alpha\tau u(x) \right) \, dx + (\beta-\alpha) \, y(0) + m \xi+ M \eta= 0 \right\}.
\]
Subsequently, we consider a new operator associated to the operator $\mathcal{A}$ (see (\ref{1.62}))
\[
\hat{\mathcal{A}} : \mathcal{D}(\hat{\mathcal{A}}) := \mathcal{D}(\mathcal{A})
\cap\hat{\mathcal{H}} \subset\hat{\mathcal{H}} \rightarrow\hat{\mathcal{H}},
\]
\begin{equation}
\label{1.62bis}\hat{\mathcal{A}} (y,z,u,\xi,\eta) = \mathcal{A} (y,z,u,\xi
,\eta), \, \forall\, (y,z,u,\xi,\eta) \in\mathcal{D}(\hat{\mathcal{A}}).
\end{equation}
By virtue of Theorem \ref{(t1)}, the operator $\hat{\mathcal{A}}$ defined by (\ref{1.62bis}) generates on $\dot{{\mathcal{H}}}$ a $C_{0}$-semigroup of contractions $e^{t\hat{\mathcal{A}}}$ under the conditions (\ref{sma}) and (\ref{uni}). Also, $\sigma(\hat{\mathcal{A}})$, the spectrum of $\hat{\mathcal{A}}$, consists of isolated eigenvalues of finite algebraic multiplicity only.

Our main result is

\begin{theo}
\label{lrkv} Assume that (\ref{1.3}) and (\ref{sma}) hold and $K$ satisfies $\alpha<K<2\beta-\alpha$. Then, there exist $C>0$ and $\omega >0$ such that for all $t>0$
we have
\[
\left\Vert e^{t\hat{\mathcal{A}}}\right\Vert _{{\mathcal{L}}(\hat{\mathcal{H}})}\leq C e^{-\omega t}.
\]
\end{theo}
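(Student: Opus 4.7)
The plan is to invoke the Huang--Pr\"uss criterion (Theorem \ref{lemraokv}), which requires checking two things: that $i\mathbb{R}\subset\rho(\hat{\mathcal{A}})$ and that the resolvent norm is uniformly bounded along the imaginary axis at infinity. Structurally the argument will follow the proof of Theorem \ref{0lrkv} almost line by line. The essential structural difference is that the dissipation estimate (\ref{dis1}), under the hypothesis $\alpha<K<2\beta-\alpha$, now supplies \textbf{three} simultaneously negative quantities -- those involving $\sigma\int_0^1 z^2\,dx$, $[\beta-(K+\alpha)/2]\xi^2$, and $[(K-\alpha)/2]u^2(1)$. In particular, damping on the boundary velocity $\xi$ is now directly built into the energy dissipation, whereas in the shifted system of Section \ref{sect04} it had to be recovered indirectly. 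This is precisely what makes the original (unshifted) operator $\hat{\mathcal{A}}$ amenable to the same scheme.

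For the spectrum condition (\ref{01.8kv}), I use that $\sigma(\hat{\mathcal{A}})$ consists of isolated eigenvalues, so it suffices to exclude $i\gamma$ as an eigenvalue for each $\gamma\in\mathbb{R}$. When $\gamma=0$, any candidate eigenfunction of $\mathcal{A}$ must be of the form $(c,0,0,0,0)$, and the defining constraint of $\hat{\mathcal{H}}$ forces $(\sigma+\beta-\alpha)c=0$, hence $c=0$ by (\ref{sma}). When $\gamma\neq 0$, take the real part of $\langle \hat{\mathcal{A}}Z,Z\rangle_{\hat{\mathcal{H}}}=i\gamma\|Z\|^2$ and apply (\ref{dis1}) to conclude $z\equiv 0$ in $L^2(0,1)$, $\xi=0$, and $u(1)=0$. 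Since $z=i\gamma y$ we then get $y\equiv 0$; the transport equation with homogeneous input $u(1)=0$ yields $u\equiv 0$; and the last equation forces $\eta=0$.

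For the resolvent estimate (\ref{01.9kv}), I argue by contradiction exactly as in Theorem \ref{0lrkv}. Assume sequences $\gamma_n\to+\infty$ and $Z^n=(y^n,z^n,u^n,\xi^n,\eta^n)\in\mathcal{D}(\hat{\mathcal{A}})$ with $\|Z^n\|_{\hat{\mathcal{H}}}=1$ and $(i\gamma_n I-\hat{\mathcal{A}})Z^n\to 0$. Taking the real part of the pairing with $Z^n$ and invoking (\ref{dis1}) gives $z^n\to 0$ in $L^2$, $\xi^n\to 0$, and $u^n(1)\to 0$. The first component equation yields $\gamma_n y^n\to 0$ and then $y^n\to 0$ in $L^2$. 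The transport equation combined with $u^n(1)\to 0$ propagates as in (\ref{0znkv}) to give $u^n\to 0$ in $L^2$. The wave equation component yields $(ay^n_x)_x/\gamma_n\to 0$, and the Gagliardo--Nirenberg interpolation $\|ay^n_x\|_2^2\leq c\,\|(ay^n_x)_x\|_2/|\gamma_n|\cdot\|\gamma_n y^n\|_2$ gives $ay^n_x\to 0$ in $L^2$. Finally, the trace estimate $(ay^n_x)(1)/\gamma_n\to 0$ plugged into the $\eta$-equation gives $\eta^n\to 0$. This contradicts $\|Z^n\|_{\hat{\mathcal{H}}}=1$. The main obstacle is purely bookkeeping: one must verify that every step of the cascade used in the proof of Theorem \ref{0lrkv} still closes once the artificial shift $(K+\alpha)/2$ is removed and the genuine boundary damping $\beta\xi^2$ is present; no new analytic ingredient is needed.
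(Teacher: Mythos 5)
Your proposal is correct and follows essentially the same route as the paper: verification of the two Huang--Pr\"uss conditions, with the eigenvalue exclusion on $i\mathbb{R}$ driven by the three strictly negative terms in (\ref{dis1}), and the resolvent bound obtained by the identical contradiction cascade (real-part pairing, transport-equation propagation, $(ay^n_x)_x/\gamma_n\to 0$, Gagliardo--Nirenberg, trace estimate). Your treatment of the $\gamma=0$ case via the defining constraint of $\hat{\mathcal{H}}$ is in fact slightly more complete than the paper's terse remark, but it is not a different argument.
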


\vspace{3mm}

\begin{proof} For sake of clarity, we shall prove our result by proceeding by steps.

{\bf Step 1:} The task ahead is to check that $\hat{\mathcal{A}}$ satisfies (\ref{01.8kv}), that is, if $\gamma$ is a real number, then $i\gamma$ is not an eigenvalue of $\hat{\mathcal{A}}$, that is,  the equation
\begin{equation}
\hat{\mathcal{A}}Z=i \gamma Z\label{eigenkv}%
\end{equation}
with $Z=(y,z,u,\xi,\eta)\in\mathcal{D}(\hat{\mathcal{A}})$ and $\gamma \in \mathbb{R}$ has only the trivial solution. In other words, one should verify from \eqref{eigenkv} that the system
\begin{align}
z  & =i\gamma y\label{eigen1}\\
(a\,y_{x})_{x} -\sigma z & =i\gamma z\label{eigen2}\\
-\frac{u_{x}}{\tau}  & =i\gamma u\label{eigen3}\\
\frac{1}{m}\left[  (ay_{x})(0)-\beta\xi+\alpha u(0)\right]   & =i\gamma
\xi\ .\label{eigen4}\\
-\frac{(ay_{x})(1)}{M}  & =i\gamma\eta,\label{eigenbis}%
\end{align}
has only the trivial solution. To do so, if $\gamma=0$, then (\ref{eigen1}) gives $z=0$ and hence $\xi=\eta=0$ as $z(0)=\xi$ and $z(1)=\eta$. In turn, if $\gamma\neq0$ then exploiting (\ref{eigenkv}) and (\ref{dis1}), we obtain
\begin{equation}
0=\Re\left(  <\hat{\mathcal{A}}Z,Z>_{{\hat{\mathcal{H}}}}\right)  \leq \displaystyle -\sigma \int_{0}^{1}  z^{2} \,dx+\left[ -\beta+\dfrac{K+\alpha}{2}\right]  \xi^{2}+\displaystyle\frac{\alpha-K}{2}u^{2}(1)
(\leq0).\label{1.7kv}%
\end{equation}
Hence $z=0$ in $L^2(0,1)$, $z(0)=\xi=0$ and $u(1)=0$. Going back to \eqref{eigenkv}, we deduce our desired result.

{\bf Step 2:} We turn now to prove that the resolvent operator of $\hat{\mathcal{A}}$ obeys the condition \eqref{01.9kv}. Otherwise, Banach-Steinhaus Theorem (see \cite{br}) leads us to claim that there exist a sequence of real numbers $\gamma_{n} \rightarrow+\infty$ and a sequence of vectors \newline $Z^{n}=(y^{n},z^{n},u^{n},\xi^{n},\eta^{n})\in\mathcal{D}(\hat{\mathcal{A}})$ with
\begin{equation}
\left\| Z^{n} \right\|_{\hat{\mathcal{H}}}=\left\| y^n\right\|_{H^{1}(0,1)}+\left\| z^n \right\|_{L^{2}(0,1)}+\left\| u^n \right\|_{L^{2}(0,1)}+\left| \xi^{n} \right|_{\mathbb{C}}+\left| \eta^{n} \right|_{\mathbb{C}}=1
\label{boun}%
\end{equation}
such that
\begin{equation}
\Vert(i\gamma_{n}I-\hat{\mathcal{A}})Z^{n}\Vert_{\dot
{\mathcal{H}}}\rightarrow0\;\;\;\;\mbox{as}\;\;\;n\rightarrow\infty
,\label{1.12kv}%
\end{equation}
that is, {as} $n\rightarrow\infty$, we have:
\begin{equation}
i\gamma_{n}y^{n}-z^{n}   \equiv f_{n}\rightarrow0\;\;\;\mbox{in}\;\;H^{1}(0,1),\label{1.13kv}%
\end{equation}%
\begin{equation}
(i\gamma_{n} +\sigma) z^{n}-(a y_x^{n})_{x}  \equiv g^{n} \rightarrow0 \;\;\;\mbox{in}\;\;L^{2}(0,1),\label{1.13bkv}%
\end{equation}%
\begin{equation}
i\gamma_{n}u^{n}+\frac{u_x^{n}}{\tau}  \equiv v^{n}\rightarrow0 \;\;\;\mbox{in}\;\;L^{2}%
(0,1),\label{1.14bkv}%
\end{equation}%
\begin{equation}
i\gamma_{n}\xi^{n}-\frac{1}{m}\left[  (a y_x^{n})(0)-\beta\xi^{n}+\alpha u^{n}(1)\right] \equiv p^{n}\rightarrow0 \;\;\;\mbox{in}\;\; \mathbb{C},\label{1.14kv}%
\end{equation}%
\begin{equation}
i\gamma_{n}\eta^{n}+\frac{(ay_x^{n})(1)}{M}  \equiv q^{n} \rightarrow0 \;\;\;\mbox{in}\;\; \mathbb{C}.\label{lasteq}%
\end{equation}

Exploring the fact that
%\begin{equation}
\[ \left\| (i\gamma_{n}I-\hat{\mathcal{A}})Z^{n} \right\|_{\hat{\mathcal{H}}%
}\geq \left\vert \Re\langle (i\gamma_{n}I-\hat{\mathcal{A}})Z^{n},Z^{n} \rangle_{\hat{\mathcal{H}}} \right\vert, \]
%\label{1.15kv}%
%\end{equation}
together with \eqref{1.7kv}-\eqref{1.12kv}, we get
\begin{equation}
z^n \rightarrow0, \quad \gamma_{n} y^n \rightarrow0, \quad \text{and}\; y^n \rightarrow0, \quad \hbox{in}\;L^{2}(0,1),
\label{z0}%
\end{equation}
as well as
\begin{equation}
\xi^{n}=z^n (0)=u^{n}(0)\rightarrow0 \; , \quad u^{n}(1)\rightarrow0 \;\;\;\mbox{in}\;\; \mathbb{C}.\label{unkv}%
\end{equation}
In the light of (\ref{1.14bkv}), we have
\[
u^{n}(x)=u^{n}(0)\,e^{-i\tau\gamma_{n}x}+\tau\,\int_{0}^{x}e^{-i\tau\gamma
_{n}(x-s)}v^{n}(s)\,ds.
\]
Recalling that $v^{n}$ converges to zero in $L^{2}(0,1)$ and invoking (\ref{unkv}), the latter yields
\begin{equation}
u^{n}\longrightarrow 0\;\;\mbox{in}\;\;L^{2}(0,1).\label{znkv}%
\end{equation}
Returning to (\ref{1.13bkv}), we get
\[\dfrac{(a y_x^{n})_{x}}{\gamma_{n}}= \left( \dfrac{\sigma}{\gamma_{n}}+i \right) z^n- \dfrac{g^{n}}{\gamma_{n}},\]
which implies by virtue of \eqref{z0}
\begin{equation}
\dfrac{(a y_x^{n})_{x}}{\gamma_{n}}  \longrightarrow0 \;\;\;\mbox{in}\;\;L^{2}(0,1).
\label{yxx}%
\end{equation}%
Furthermore, we have
$$(ay_x^{n})(1)=\int_{x}^{1} (a y_r^{n})_{r} \,dr + a y_x^{n},
$$
and hence
$$\left|(ay_x^{n})(1)\right|^2 \leq c \left( \left\|(a y_x^{n})_{x} \right\|_2^2 +\left\|a y_x^{n} \right\|_2^2 \right),
$$
where $\| \cdot \|_2$ is the usual norm in $L^{2}(0,1)$, and $c$ is a positive constant (independent of $n$). For convenience, we shall use, in the sequel, the same letter $c$ to represent a positive constant which is independent of $n$.
Whereupon,
\begin{equation}
\dfrac{(a y_x^{n})(1)}{\gamma_{n}}  \longrightarrow0 \;\;\;\mbox{in}\;\; \mathbb{C},
\label{yx1}
\end{equation}
by means of \eqref{boun} and \eqref{yxx}. Amalgamating \eqref{1.14kv} and \eqref{yx1}, we have
\begin{equation}
\eta^{n}=z^n (1) \rightarrow0, \;\;\;\mbox{in}\;\; \mathbb{C}.\label{eta}%
\end{equation}
Evoking Gagliardo-Nirenberg interpolation inequality \cite{br}, we have
\[
\left\|a y_x^n \right\|_2^2 \leq  c \, \dfrac{\left\|(a y_x^n)_x \right\|_2}{\left| \gamma_{n} \right|}  \left\|\gamma_{n} y^n  \right\|_2,
\]
for some positive constant $c$. Thereby,
\begin{equation}
\left\|a y_x^n \right\|_2   \longrightarrow 0 \;\;\;\mbox{in}\;\;L^{2}(0,1),\label{der}%
\end{equation}
due to \eqref{yxx} and \eqref{z0}.

Lastly, the findings in \eqref{z0}-\eqref{znkv}, \eqref{eta}, and \eqref{der}  contradicts the fact that $\left\Vert
Z^{n}\right\Vert _{\hat{\mathcal{H}}}=1,\;\forall\ n\in\mathbb{N}.$ Whereupon, we managed to show that the conditions (\ref{01.8kv}) and (\ref{01.9kv}) are fulfilled. This achieves the proof of Theorem \ref{lrkv}.
\end{proof}

Note that one immediate consequence of Theorem \ref{lrkv} is the exponential convergence of the solutions of the closed-loop system (\ref{3}) in ${\mathcal{H}}$ to $(\zeta,0,0,0,0)$ as $t\longrightarrow+\infty$, where $\Omega$ is given by (\ref{cste}).

\subsection{Lack of convergence when $\alpha \geq \beta$}
This subsection is intended to provide an answer to the following question:  what happens to the solutions of the closed-loop system (\ref{3})  if the condition (\ref{sma}) used to get the convergence results of the system (\ref{3}) is violated, that is, if $\alpha \geq \beta$? Obviously, the answer to such a question could be provided once the semigroup $e^{t \hat{\mathcal{A}}}$ is showed to be unstable in $\hat{\mathcal{H}}$, for some delays $\tau$. For sake of simplicity and without loss of generality, we shall suppose that $a=1$. Then, let us look for a solution $y(x,t)=e^{\gamma t} g(x)$, with $\gamma$ is a positive real number and $g$ is a nonzero function of $H^2(0,1)$, of the system associated to the operator $\hat{\mathcal{A}}$ (see (\ref{1.62bis}) on $\hat{{\mathcal{H}}}$, namely,
\begin{equation}
\left\{
\begin{array}[c]{ll}%
y_{tt}(x,t)-y_{xx}(x,t)+\sigma y_t (x,t) =0, & 0<x<1,\;t>0,\\
my_{tt}(0,t)-  y_{x}  (0,t)=\alpha y_{t}(0,t-\tau)-\beta y_{t}(0,t), & t>0,\\
My_{tt}(1,t)+ y_{x}  (1,t)=0, & t>0,
\label{n1.6f}%
\end{array}
\right.
\end{equation}
with $ \int_{0}^{1} y_t(x,t)\, dx +\alpha \tau \int_{0}^{1} y_{t}(0,t-x \tau) \, dx + (\beta-\alpha) \, y(0) + m y_{t}(0,t)+ M y_{t}(1,t)= 0.$ Then, one can claim $\|y\|_{L^2(0,1)} =e^{\gamma t} \|g\|_{L^2(0,1)} \rightarrow +\infty$ and thereby the solution of (\ref{n1.6f}) is unstable.

One can readily verify that $y(x,t)=e^{\gamma t} g(x)$ is a solution to (\ref{n1.6f}) if $g$ is a nontrivial solution to
\begin{equation}
\left\{
\begin{array}{l}
g_{xx} -\left(\gamma^2+\sigma \gamma\right) g=0,\\
g_{x}(0)+\gamma \left(\alpha e^{-\gamma \tau}-m \gamma -\beta \right) g(0)=0,\\
g_{x}(1)+M \gamma^2 g(1)=0.\label{nk1}%
\end{array}
\right.
\end{equation}
Solving the differential equation of (\ref{nk1}) yields $y(x)=c_1 e^{-\sqrt{\gamma^2+\sigma \gamma} x}+c_2 e^{\sqrt{\gamma^2+\sigma \gamma}}$, where $c_1$ and $c_2$ are constants to be determined by the boundary conditions in (\ref{nk1}). Indeed, the latter implies that $g$ is a nontrivial solution of (\ref{nk1}) if and only if $\gamma$ is a nonzero solution of the following equation:
\begin{eqnarray}
&&\left[ \gamma \left( \alpha e^{-\gamma \tau} -\gamma m -\beta \right) -\sqrt{\gamma^2+\sigma \gamma} \right] \left[M \gamma^2+ \sqrt{\gamma^2+\sigma \gamma} \right] e^{\sqrt{\gamma^2+\sigma \gamma}} \nonumber\\
&-& \left[ \gamma \left( \alpha e^{-\gamma \tau} -\gamma m - \beta\right) + \sqrt{\gamma^2+\sigma \gamma} \right] \left[M \gamma^2 - \sqrt{\gamma^2+\sigma \gamma} \right] e^{-\sqrt{\gamma^2+\sigma \gamma}}=0.\label{nk3}
\end{eqnarray}
Now let us take $\gamma=\sigma >0$ and $M=\sqrt{2}/\sigma.$ Thereafter, (\ref{nk3}) gives
\[
\alpha e^{-\frac{\sqrt{2}}{M} \tau} -\beta -\frac{\sqrt{2}}{M} m -\sqrt{2}=0.
\]
Lastly, solving the above equation, we get $\tau=\frac{M}{\sqrt{2}} \ln \left(\frac{\alpha M}{(\beta+\sqrt{2})M+\sqrt{2}m} \right)>0$  provided that $\alpha \geq \beta + \sqrt{2} \left(1+\frac{m}{M}\right)$. Bearing in mind this choices, we conclude that (\ref{nk3}) is satisfied  with $\gamma = \sigma >0$. Therefore, we have
\begin{theo} If the assumption (\ref{sma}) is not satisfied, then there exists a delay $\tau$ for which the convergence of solutions of the closed-loop system (\ref{3}) does not hold. \label{non2}
\end{theo}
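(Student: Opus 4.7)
The plan is to disprove convergence by exhibiting an explicit exponentially growing solution of (\ref{n1.6f}) for a suitable delay $\tau$, when $\alpha \geq \beta$. Since the semigroup $e^{t\hat{\mathcal{A}}}$ being unstable on $\hat{\mathcal{H}}$ is enough to preclude convergence of solutions of (\ref{3}), I only need to produce one unbounded orbit. To keep the computation tractable, I would (as suggested in the preamble) take $a \equiv 1$.

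First, I would make the separation ansatz $y(x,t) = e^{\gamma t} g(x)$ with $\gamma > 0$ a real parameter to be chosen and $g \in H^2(0,1)$ nontrivial. Substituting into the PDE and the two boundary ODEs of (\ref{n1.6f}) immediately produces the two-point boundary value problem (\ref{nk1}), where the factor $e^{-\gamma\tau}$ appears in the $x=0$ condition because the delayed term $y_t(0,t-\tau)$ becomes $\gamma e^{-\gamma\tau} g(0) e^{\gamma t}$. The general solution of the second order ODE with constant coefficients is $g(x) = c_1 e^{-\mu x} + c_2 e^{\mu x}$, where $\mu = \sqrt{\gamma^2+\sigma\gamma}$. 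Imposing the two boundary conditions yields a homogeneous linear system in $(c_1,c_2)$, and the vanishing of its determinant is exactly the characteristic equation (\ref{nk3}).

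Second, I would pick the special values $\gamma = \sigma$ and $M = \sqrt{2}/\sigma$. With this choice one computes $\mu = \sqrt{\sigma^2+\sigma^2}=\sigma\sqrt{2} = 2/M$, so that $M\gamma^2 = M\sigma^2 = \sqrt{2}\sigma = \mu$. Consequently the bracket $\bigl[M\gamma^2 - \sqrt{\gamma^2+\sigma\gamma}\bigr]$ in the second line of (\ref{nk3}) vanishes identically, and the second term drops out. What remains is the vanishing of the first term, which (after dividing by the nonzero factor $\bigl[M\gamma^2+\mu\bigr]e^{\mu}$) reduces to the single scalar equation
\[
\alpha e^{-\sigma\tau} - \beta - \tfrac{\sqrt{2}}{M}\,m - \sqrt{2} \;=\; 0,
\]
equivalently $\alpha e^{-(\sqrt{2}/M)\tau} = \beta + \sqrt{2}\bigl(1+\tfrac{m}{M}\bigr)$.

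Third, I would solve this equation for $\tau$, obtaining the explicit value
\[
\tau \;=\; \frac{M}{\sqrt{2}}\,\ln\!\left(\frac{\alpha M}{(\beta+\sqrt{2})M+\sqrt{2}\,m}\right).
\]
The main (and essentially only) obstacle is ensuring $\tau > 0$: the logarithm's argument must exceed $1$, which happens precisely when $\alpha \geq \beta + \sqrt{2}\bigl(1+\tfrac{m}{M}\bigr)$. Since this inequality forces $\alpha > \beta$, it is compatible with the negation of (\ref{sma}) and proves that (\ref{sma}) is not only sufficient but essentially necessary in the regime we constructed. With this $\tau$, the function $y(x,t)=e^{\sigma t}g(x)$ is a genuine solution of (\ref{n1.6f}), its $L^2$-norm grows like $e^{\sigma t}$, so the associated trajectory in $\hat{\mathcal{H}}$ is unbounded. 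This contradicts any convergence statement for the closed-loop system (\ref{3}) and completes the proof.
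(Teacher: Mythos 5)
Your proposal is correct and follows essentially the same route as the paper: the separation ansatz $y(x,t)=e^{\gamma t}g(x)$ leading to (\ref{nk1}) and the characteristic equation (\ref{nk3}), the choice $\gamma=\sigma$, $M=\sqrt{2}/\sigma$ making $M\gamma^2=\sqrt{\gamma^2+\sigma\gamma}$ so the second term of (\ref{nk3}) drops, and the same explicit $\tau=\frac{M}{\sqrt{2}}\ln\bigl(\frac{\alpha M}{(\beta+\sqrt{2})M+\sqrt{2}m}\bigr)>0$ under $\alpha\geq\beta+\sqrt{2}(1+\frac{m}{M})$. Your explicit remark that the second bracket vanishes identically is a slightly cleaner justification of the reduction than the paper's terse ``(\ref{nk3}) gives,'' but the argument is the same, including the (shared) restriction to $\alpha$ sufficiently larger than $\beta$ rather than all $\alpha\geq\beta$.
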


\section{Concluding discussion}

\label{sect6}
This article has addressed the problem of improving the convergence rate of solutions of an overhead crane system modeled by a hyperbolic PDE coupled with two ODEs and subject to the effect of a time-delay in the boundary. First, an interior damping control is proposed. Then, the system is showed to be well-posed in a functional space with an appropriate norm. Next, it is proved that the solutions converge to  an equilibrium state. Last but not least, we show that the convergence rate of solutions is exponential. This finding improves that of the authors in a recent work \cite{ac}, where the convergence is only polynomial.

\section*{Acknowledgement}
This work was supported and funded by Kuwait University, Research Project No. (SM04/17).


\begin{thebibliography} {99}


\bibitem{ad} R.A. Adams, {Sobolev Spaces}, Academic Press, New York-London, 1975.

\bibitem {ac2} K. Ammari and B. Chentouf, Asymptotic behavior of a delayed wave equation without displacement term, {\em Z. Angew. Math. Phys.,} {\bf 68} (2017), no. 5, Art. 117, 13 pp.

\bibitem {ac} F. Al-Musallam, K. Ammari and B. Chentouf, Asymptotic behavior of a 2D overhead crane with input delays in the boundary control, \emph{Zeitschrift fur Angewandte Mathematik und Mechanik,} https://doi.org/10.1002/zamm.201700208.


\bibitem {ANBCR} B. d'Andr\'ea-Novel, F. Boustany and B. Rao, Feedback stabilization of a hybrid PDE-ODE system: Application to an overhead crane, \emph{Math. Control. Signals Systems.,} \textbf{7} (1994), 1--22.

\bibitem {AC} B. d'Andr\'ea-Novel and J.M. Coron, Exponential stabilization of an overhead crane with flexible cable via a back-stepping approach, \emph{Automatica,} \textbf{36} (2000) 587--593.


\bibitem {br} H. Brezis, {Functional Analysis, Sobolev Spaces and Partial Differential Equations}, Universitex, Springer, 2011.



\bibitem {mif2} F. Conrad and A. Mifdal, Strong stability of a model of an overhead crane, \emph{Control Cybernet.,} \textbf{27} (1998) 363--394.

\bibitem {cos} F. Conrad, G. O'Dowd and F.Z. Saouri, Asymptotic behavior for a model of flexible cable with tip masses, \emph{Asymptot. Anal.,} \textbf{30} (2002) 313--330.


\bibitem {da} C.M. Dafermos, Asymptotic stability in viscoelasticity, \emph{Arch. Rational Mech. Anal.,} \textbf{37} (1970) 297--308.


\bibitem {HA} A. Haraux, {\ Syst\`ems Dynamique Dissipatifs et Applications}, Collection RMA (17). Paris: Masson, 1991.

\bibitem{huang} F. Huang,  Characteristic conditions for exponential stability of linear dynamical systems in Hilbert space, {\em Ann. Differential Equations,} {\bf 1} (1985), 43-56.


\bibitem {ka} T. Kato, {\ Perturbation theory of linear Operators}, Springer-Verlag, New York, 1976.

\bibitem {mif1} A. Mifdal, {Stabilisation uniforme d'un syst\`eme hybride}, \emph{C. R. Acad. Sci. Paris. S\'erie I.,} \textbf{324} (1997) 37--42.

\bibitem {Pa:83} A. Pazy, {\ Semigroups of Linear Operators and Applications to Partial Differential Equations}, Springer-Verlag, New York, 1983.

\bibitem {pruss} J. Pr{\"u}ss, \emph{On the spectrum of $C_{0}$}-semigroups, \emph{Trans. Amer. Math. Soc.,} \textbf{284} (1984), 847--857.

\bibitem {Ra} B. Rao, Decay estimate of solution for hybrid system of flexible structures, \emph{Euro. J. Appl. Math.,} \textbf{4} (1993), 303--319.

\end{thebibliography}
\end{document}